\def\P{{\mathbb P}}
\def\E{{\mathbb E}}
\def\R{{\mathbb R}}
\def\1{{\mathbf 1}}
\newtheorem{theorem}{Theorem}[section]
\newtheorem{lemma}[theorem]{Lemma}
\newtheorem{corollary}[theorem]{Corollary}
\newtheorem{proposition}[theorem]{Proposition}
\newtheorem{theo}[theorem]{Theorem}
\newenvironment{proof}[1][Proof.]{\textbf{#1} }{\hfill $\blacksquare$}
\def\beq{\begin{equation}}
\def\eeq{\end{equation}}
\newcommand{\bei}{\begin{itemize}}
\newcommand{\eei}{\end{itemize}}
\newcommand{\ben}{\begin{enumerate}}
\newcommand{\een}{\end{enumerate}}
\newcommand{\beqn}{\begin{eqnarray}}
\newcommand{\beqnn}{\begin{eqnarray*}}
\newcommand{\eeqn}{\end{eqnarray}}
\newcommand{\eeqnn}{\end{eqnarray*}}
\newcommand{\brm}{\begin{rmk}}
\newcommand{\erm}{\end{rmk}}
\begin{document}

\title{A new approximation of the Height process of a CSBP}
\author{
Ibrahima~Dram\'{e}  \footnote{  \scriptsize{Université Cheikh Anta Diop de Dakar, FST, LMA, 16180 Dakar-Fann, S\'en\'egal. iboudrame87@gmail.com
}}
}

\maketitle

\begin{abstract}
We code Galton-Walton trees by a continuous height process, in order to give a precise meaning to the convergence of forests of trees. This allows us to establish the convergence of the forest of genealogical trees of the branching process of a large population towards the genealogical trees of the limiting continuous state branching process (CSBP). The approximation considered here is new, compared to that which has been studied in \cite{DP2}.
\end{abstract}

\vskip 1mm
\noindent{\textbf{Keywords}:} Continuous-State Branching Processes; Galton-Watson Processes; Lévy Processes; Height Process;
\vskip 1mm

\section{Introduction}
Continuous state branching processes (or CSBP in short) are the analogues of Galton-Watson (G-W) processes in continuous time and continuous state space. Such classes of processes have been introduced by Jirina \cite{Ji} and studied by many authors included Grey \cite{Grey}, Lamperti \cite{Lam1},  to name but a few. These processes are the only possible weak limits that can be obtained from sequences of rescaled G-W processes, see  Lamperti \cite{Lam2}. 

While rescaled discrete-time G-W processes converge to a CSBP,  it has been shown in Duquesne and Le Gall \cite{DLG} that the genealogical structure of the G-W processes converges too. More precisely, the corresponding rescaled sequences of discrete height process, converges to the height process in continuous time that has been introduced by Le Gall and Le Jan in \cite{lGlJ}. For the approximation by continuous time 
generalized G-W processes we refer to our recent paper \cite{DP}.

Some work has been also devoted recently to the description of the genealogy of  generalized CSBPs, see Dramé and Pardoux  \cite{DP2} and Dram\'{e} et al. in \cite{DPS} for the case of continuous such processes and Li, Pardoux and Wakolbinger \cite{LiPW} for the general case. In \cite{DP2} Dramé and Pardoux give an approximation of the Height process of a continuous state branching process in terms of a stochastic integral equation with jumps, which is well suited for the case of generalized CSBPs. The present paper studies another approximation of the genealogy of a continuous time GW process to that of a generalized possibly discontinuous CSBP, under the same assumptions as \cite{DP2}. Note that, it would be interesting to prove a priori that the two approximations must have the same limit.

The organization of the  paper is as follows : In Section 2 we recall some basic definitions and notions concerning branching processes. 
Section 3 is devoted to the description of the discrete approximation of both the population process and the height process of its genealogical forest of trees. We prove the convergence of the height process.
 We shall assume that all random variables in the paper are defined on the same probability space $( \Omega,\mathcal{F},\mathbb{P})$. We shall use the following notations $\mathbb{Z}_+=\{0,1, 2,... \}$, $\mathbb{N}=\{1, 2,... \}$, $\mathbb{R}=(-\infty, \infty)$ and $\mathbb{R}_+=[0, \infty)$. For $x$ $\in$ $\mathbb{R}_+$, $[x]$ denotes the integer part of $x$.

\section{The Height process of a continuous state branching process}
\subsection{Continuous state branching process}
A CSBP is a $\mathbb{R}_+$-valued strong Markov process with the property that $\mathbb{P}_x$ denoting the law of the process when starts from $x$ at time $t=0$, $\mathbb{P}_{x+y}= \mathbb{P}_{x} \ast \mathbb{P}_{y}$. More precisely, a CSBP $X^x=(X_t^x, \ t\geqslant0)$ (with initial condition $X_0^x=x$) is a Markov process taking values in $[0, \infty]$, where $0$ and $\infty$ are two absorbing states, and satisfying the branching property; that is to say, it's Laplace transform satisfies 
\begin{equation*}
   \mathbb{E} \left[ \exp (-\lambda X_t^x)\right] = \exp \left\{ -x u_t(\lambda)\right\}, \quad \mbox{for} \ \lambda\geqslant0, 
\end{equation*}
for some non negative function $u_t(\lambda)$. According to Silverstein \cite{Sil}, the function $u_t$ is the unique nonnegative solution of the integral equation : $u_t(\lambda) = \lambda - \int_{0}^{t} \psi ( u_r(\lambda)) dr, $
where $\psi$ is called the branching mechanism associated with $X^x$ and is defined by
$ \psi(\lambda) = b\lambda +{c\lambda^2}+ \int_{0}^{\infty} (e^{-\lambda z}-1+\lambda z\mathbf{1}_{\{z\le 1\}}) \mu(dz)$, with $b \in \mathbb{R}$, $c\geqslant0$ and $\mu$ is a $\sigma$-finite measure which satisfies $\int_{0}^{\infty}(1\wedge z^2) \mu(dz) < \infty$. We shall in fact assume in this paper that 
\begin{equation*}
 {(\bf H)}  : \quad \int_{0}^{\infty} (z\wedge z^2) \mu(dz) < \infty \quad \mbox{and} \quad c>0 .
\end{equation*}
The first assumption implies in particular that the process $X^x$ does not explode and it allows is to write the last integral in the above equation in the following form 
\begin{equation}\label{PSIEXPRES}
 \psi(\lambda) = b\lambda +{c\lambda^2}+ \int_{0}^{\infty} (e^{-\lambda z}-1+\lambda z) \mu(dz).
\end{equation}
From Fu and Li \cite{FL} (see also the results in Dawson-Li \cite{DaLi}), we have 
\begin{equation}\label{XT}
X_t^x= x - b \int_{0}^{t} X_s^x ds + \sqrt{2c} \int_{0}^{t}  \int_{0}^{X_s^x} W(ds,du) + \int_{0}^{t}\int_{0}^{\infty}\int_{0}^{X_{s^-}^x}z \overline{M}(ds, dz, du),
\end{equation}
where $W(ds,du)$ is a space-time white nose on $(0,\infty)^{2}$, $M(ds, dz, du)$ is a Poisson random measure on $(0,\infty)^{3}$, with intensity $ds\mu(dz)du$, and $\overline{M}$ is the compensated measure of $M$. 
\subsection{The height process}
We shall also interpret below the function $\psi$ defined by \eqref{PSIEXPRES} as the Laplace exponent of a spectrally positive L\'{e}vy process $Y$. Lamperti \cite{Lam1} observed that CSBPs are connected to L\'{e}vy processes with no negative jumps by a simple time-change. More precisely, define
\begin{equation*}
A_s^x=\int_{0}^{s}X_t^x dt, \quad \tau_s=\inf\{t>0, \ A_t^x >s\} \quad \mbox{and} \quad Y(s)=X_{\tau_s}^x. 
\end{equation*}
Then $Y(s)$ is a Lévy process of the form until the first that it hits $0$
\begin{equation}\label{YLevy}
Y(s)= - b s + \sqrt{2c} B(s) + \int_{0}^{s}\int_{0}^{\infty}z \overline{\Pi}(dr, dz),
\end{equation}
where $B$ is a standard Brownian motion and $\overline{\Pi}(ds, dz)= \Pi(ds, dz)- ds\mu(dz)$, $\Pi$ being a Poisson random measure on $\mathbb{R}_{+}^{2}$ independent of $B$ with mean measure $ds\mu(dz)$. We refer the reader to \cite{Lam1} for a proof of that result. To code the genealogy of the CSBP, Le Gall and Le Jan \cite{lGlJ} introduced the so-called height process, which is a functional of a L\'{e}vy process with Laplace exponent $\psi$; see also Duquesne and Le Gall \cite{DLG}. In this paper, we will use the new definition of the height process $H$ given by Li et all in \cite{LiPW}. Indeed, if the Lévy process $Y$ has the form \eqref{YLevy}, then the associated height process is given by 
\begin{equation}\label{CHS1}
c H(s)= Y(s) - \inf_{0\leqslant r \leqslant s} Y(r) - \int_{0}^{s}\int_{0}^{\infty}\left(z + \inf_{r\leqslant u \leqslant s}Y(u)-Y(r) \right)^+\Pi(dr, dz),
\end{equation}
and it has a continuous modification. Note that the height process $H$ is the one defined in formula (1.4) in \cite{DLG}, i.e
$cH(s)=| \{\overline{Y}^s(r); \ 0\le r\le s \}|,$ where $\overline{Y}^s(r):= \inf_{r\le u\le s} Y(u)$  and $|A|$ denotes the Lebesgue measure of the set $A$.

\section{Approximation of the Height process}
In the following,  we consider a specific forest of Bellman-Harris trees, obtained by Poissonian sampling of the height process $H$. In other words, let $\alpha >0$ and we consider a standard Poisson process with intensity $\alpha$. We denote by $\tau_1^\alpha \le \tau_2^\alpha \le \cdots $ the jump times of this Poisson process. If $H$ is seen as the contour process of a continuous tree, consider the forest of the smaller trees carried by the vector $H(\tau_1^\alpha), H(\tau_2^\alpha),\cdots $. We have 

\begin{proposition} $(Theorem \ 3.2.1 \ in \ \cite{DLG})$
The trees in this forest are trees, which are distributed as the family tree of a continuous-time Galton-Watson process starting with one individual at time $0$ and such that : 

$\ast$ Lifetimes of individuals have exponential distributions with parameter  $\psi^{\prime} ( \psi^{-1}(\alpha))$;

$\ast$ The offspring distribution is the law of the variable $\eta$ with generating function : \quad
\[ \E(s^\eta)= s+ \frac{\psi((1-s)\psi^{-1}(\alpha))}{\psi^{-1}(\alpha) \psi^{\prime} ( \psi^{-1}(\alpha))}.\]
\end{proposition}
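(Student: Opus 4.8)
The plan is to recover the statement from Theorem~3.2.1 of \cite{DLG}: since formula \eqref{CHS1} was recalled above to define the very process of formula~(1.4) of \cite{DLG}, it suffices to reproduce the excursion-theoretic argument in the present notation. Write $v:=\psi^{-1}(\alpha)$, so that $\psi(v)=\alpha$ and $\psi'(v)>0$ under $(\mathbf H)$, and let $\mathbf N$ be the It\^o excursion measure of $H$ away from $0$ (equivalently, of the reflected L\'evy process $Y-\inf Y$). I will use three classical facts about $\mathbf N$ (see \cite{DLG}): (i) $\mathbf N(1-e^{-\lambda\sigma})=\psi^{-1}(\lambda)$, where $\sigma$ is the duration of the excursion; (ii) writing $\ell_a:=L^a_\sigma$ for the total local time of $H$ at level $a$ along the excursion, $\mathbf N(1-e^{-\mu\ell_a})=u_a(\mu)$, where $u$ is the function introduced in Section~2.1, i.e.\ the solution of $\partial_a u_a(\mu)=-\psi(u_a(\mu))$, $u_0(\mu)=\mu$ --- a Ray--Knight theorem for $H$; (iii) the special branching Markov property: conditionally on $\ell_a$ and on $H$ below level $a$, the excursions of $H$ above level $a$ form a Poisson point process with intensity $\ell_a\,\mathbf N(\mathrm d\omega)$, and the continuous sub-trees they code are i.i.d.

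Working under $\mathbf N$, superimpose on $[0,\sigma]$ a rate-$\alpha$ Poisson process with atoms $\tau_i^\alpha$; their heights $H(\tau_i^\alpha)$ mark leaves of the continuous tree coded by $H$, and the reduced tree $\mathcal R$ --- the union of the ancestral lineages of these marked leaves --- has, by construction, a single root edge and finite edge lengths. Let $Z_a$ be the number of excursions of $H$ above level $a$ that carry at least one mark, i.e.\ the number of individuals of $\mathcal R$ alive at height $a$. Since marks falling in disjoint excursions are independent and an excursion of duration $\sigma(\omega)$ is unmarked with probability $e^{-\alpha\sigma(\omega)}$, fact (iii) together with the thinning property of Poisson processes shows that, conditionally on $\ell_a$, $Z_a$ is Poisson with parameter $\ell_a\,\mathbf N(1-e^{-\alpha\sigma})=v\,\ell_a$ by (i). Hence, using (ii), for $s\in[0,1]$,
\[
\mathbf N\big[1-s^{Z_a}\big]=\mathbf N\big[1-e^{-v(1-s)\ell_a}\big]=u_a\!\big(v(1-s)\big).
\]
Letting $a\downarrow0$ yields $\lim_{a\downarrow0}\mathbf N[1-s^{Z_a}]=v(1-s)$; combined with $\mathbf N(\exists\ \mathrm{mark})=\mathbf N(1-e^{-\alpha\sigma})=v$, this shows that the non-trivial reduced trees occur along a Poisson clock of intensity $v$ in the local time of $H$ at $0$, hence are i.i.d.\ copies of $\mathcal R$ under $\mathbf N(\,\cdot\mid\exists\ \mathrm{mark})$.

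Next I would match this with the continuous-time Galton--Watson process $\Xi$ with lifetime parameter $d:=\psi'(v)$ and offspring law $\eta$ of generating function $g(s)=s+\psi\big((1-s)v\big)/\big(v\psi'(v)\big)$. Its population generating function $F(a,s):=\mathbb E_1[s^{\Xi_a}]$ solves $\partial_a F=d\,(g(F)-F)$ with $F(0,s)=s$; substituting $g$ this is $\partial_a F=\tfrac1v\,\psi\big(v(1-F)\big)$. Therefore $w(a):=v\,(1-F(a,s))$ satisfies $w'=-\psi(w)$, $w(0)=v(1-s)$, the Cauchy problem also solved by $a\mapsto u_a(v(1-s))$; by uniqueness (here $\psi\in C^1$) they agree, so $\mathbf N[1-s^{Z_a}]=v\,\mathbb E_1[1-s^{\Xi_a}]$. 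Thus the forest of reduced trees and the family forest of $\Xi$ have the same Poisson intensity $v$ and the same one-dimensional population law; iterating the computation by conditioning at successive levels and using that, under $\mathbf N$, the sub-trees above a level are i.i.d.\ and independently marked, identifies the full finite-dimensional law of $(Z_a)_{a\ge0}$ with that of $(\Xi_a)_{a\ge0}$.

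Finally, to upgrade this to an identity of laws of \emph{trees}, I would exploit the self-similarity in (iii): conditionally on $\ell_a$ and on $H$ below level $a$, the marked sub-trees of $H$ above level $a$ are i.i.d.\ copies of $\mathcal R$, and these are precisely the sub-trees of $\mathcal R$ rooted at height $a$. Unwinding this at the first branching height $T$ of $\mathcal R$ shows that $T$ is exponential with parameter $\psi'(v)$, that the number of branches issued at $T$ has the law $\eta$ above, and that those branches are i.i.d.\ copies of $\mathcal R$ --- which is exactly the recursive description of the family tree of $\Xi$ started from one individual. I expect this last step to be the main obstacle: one must carefully match the excursion structure of $H$ with the combinatorial--metric recursion of the family tree (the branch points of $\mathcal R$ are the levels at which $Z$ jumps; a jump of $Z$ by $k-1$ corresponds to an individual dying with $k$ offspring; a.s.\ no two branchings share a height; and $\mathbf N(\,\cdot\mid\exists\ \mathrm{mark})$, normalised, is genuinely the law of a single $\Xi$-family tree), with additional care in the supercritical case $b<0$, where $\sigma$ and $\mathcal R$ may be infinite with positive probability. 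The Poisson and ODE manipulations above are then routine.
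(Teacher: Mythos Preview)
The paper does not prove this proposition at all: it is stated as a citation of Theorem~3.2.1 in \cite{DLG} and used as a black box, with no argument supplied. Your proposal therefore goes well beyond what the paper does, by sketching the excursion-theoretic proof from \cite{DLG} itself.

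As a sketch of that argument, your outline is sound: the identification $\mathbf N[1-s^{Z_a}]=u_a\!\big(v(1-s)\big)$ via Poisson marking, the Ray--Knight relation, and the branching property, followed by the ODE matching $w'=-\psi(w)$ with the forward equation for the continuous-time Galton--Watson population, is exactly the mechanism in \cite{DLG}. You are right to flag the last step --- passing from equality of one-dimensional (or even finite-dimensional) population laws to equality of \emph{tree} laws --- as the place requiring care; in \cite{DLG} this is handled by exploiting the recursive structure under the branching property, and your description of how $T$, the offspring at $T$, and the independent subtrees arise is accurate. For the purposes of the present paper, though, none of this is needed: a one-line reference to \cite{DLG} suffices, which is precisely what the paper does.
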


Let $N\geqslant1$ be an integer which will eventually go to infinity. In the next two sections, we choose a sequence $\delta_N \downarrow 0$ such that, as $N\rightarrow \infty$, 
\begin{equation*}
{(\bf A)}  : \quad \frac{1}{N} \int_{\delta_N}^{+\infty}  \mu(dz) \rightarrow 0.
\end{equation*}
This implies in particular that $\frac{1}{N} \int_{\delta_N}^{+\infty} z \mu(dz) \rightarrow 0.$
Moreover, we will need to consider
\begin{equation}\label{deltan}
 \psi_{\delta_N}(\lambda) =  {c\lambda^2}+ \int_{\delta_N}^{\infty} (e^{-\lambda z}-1+\lambda z) \mu(dz).
\end{equation}
We will also set $\alpha= \psi_{\delta_N}(N)$ in the limit of large populations. 

\subsection{A discrete mass approximation}\label{sec:GWcont}

The aim of this subsection is to set up a "discrete mass - continuous time" approximation of \eqref{XT} . To this end, we set 
\begin{equation*}
h_{N}(s)= s + \frac{\psi_{\delta_N}((1-s)N)}{N \psi_{\delta_N}^{\prime} (N)}, \quad \quad |s| \leqslant 1.
\end{equation*}
It is easy to see that $s\rightarrow h_{N}(s)$ is an analytic function in $(-1,1)$ satisfying $h_{N}(1)=1$ and $\frac{d^n}{ds^n}h_{N}(0)\ge 0, \  n \ge 0.$ Therefore $h_{N}$ is a probability generating function. and we have $h_{N}(s)= \sum_{\ell\ge0} \nu_{N}(\ell)s^\ell,  \ |s| \leqslant 1,$ where $\nu_{N}$ is probability measure on $\mathbb{Z}_+$. Fix $x>0$ the approximation of \eqref{XT} will be given by the total mass $X^{N,x}$ of a population of individuals, each of which has mass $1/N$. The initial mass is $X_0^{N,x}= [Nx]/N$, and $X^{N,x}$ follows a Markovian jump dynamics :  from its current state $k/N$,  
 \begin{equation*}
X^{N,x} \  \mbox{jumps to} \ 
\left\{
    \begin{array}{ll}
   \frac{k+\ell-1}{N} \ \mbox{at rate} \  \psi_{\delta_N}^{\prime} (N) \nu_{N}(\ell) k,  \ \mbox{for all} \ \ell\ge2; &\\\\  \frac{k-1}{N}  \quad  \mbox{at rate} \  \psi_{\delta_N}^{\prime} (N) \nu_{N}(0) k.
    
&
           \end{array}
           \right.
\end{equation*}

In this process, each individual dies without descendant at rate
\[ \frac{\psi_{\delta_N}(N)}{N}=cN+\int_{\delta_N}^\infty z\mu(dz)-\frac{1}{N}\int_{\delta_N}^\infty (1-e^{-Nz})\mu(dz),\]
dies and leaves two descendants at rate
$ cN+\frac{1}{N}\int_{\delta_N}^\infty \frac{(Nz)^2}{2}e^{-Nz}\mu(dz)$,
and finally dies and leaves $k$ descendants ($k\ge3$) at rate
$\frac{1}{N}\int_{\delta_N}^\infty\frac{(Nz)^k}{k!}e^{-Nz}\mu(dz).$ Let $\mathcal{D}([0,\infty), \mathbb{R}_+)$ denote the space of functions from $[0,\infty)$ into $\mathbb{R}_+$ which are right continuous and have left limits at any $t>0$. We shall always equip the space $\mathcal{D}([0,\infty),\mathbb{R}_+)$ with the Skorohod topology. The main limit proposition of this subsection is a consequence of Theorem 4.1 in \cite{DP}. 
\begin{proposition}\label{TH1}
 Suppose that Assumptions $(\bf H)$ is satisfied. Then, as $N\rightarrow +\infty$, $\{X_t^{N,x}, \ t\geqslant0 \}$ converges to $\{X_t^x, \ t\geqslant0 \}$ in distribution on $\mathcal{D}([0,\infty),\mathbb{R}_+)$, where $X^x$ is the unique solution of the SDE \eqref{XT}.  
\end{proposition}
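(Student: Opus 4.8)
The plan is to recognise $X^{N,x}$ as a member of the family of approximating processes covered by Theorem~4.1 in \cite{DP}, and then to check that theorem's hypotheses. By construction, $X^{N,x}$ is the total-mass process, each individual carrying mass $1/N$, of a continuous-time Galton--Watson process in which every individual lives an exponential time of parameter $\psi_{\delta_N}'(N)$, is then replaced by a random number of children whose law $\nu_N$ has generating function $h_N$, and the population starts from $Z_0^N=[Nx]$ individuals. This is exactly the ``discrete mass -- continuous time'' approximation to which Theorem~4.1 in \cite{DP} applies, so the task reduces to instantiating that result.

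First I would record the identity linking the discrete dynamics to a branching mechanism. Writing $F_t^N(s)=\E_1\big[s^{Z_t^N}\big]$, one has $\partial_t F_t^N=\psi_{\delta_N}'(N)\big(h_N(F_t^N)-F_t^N\big)$, and by the very definition of $h_N$ the right-hand side equals $N^{-1}\psi_{\delta_N}\big(N(1-F_t^N)\big)$. Hence $G_t^N:=N\big(1-F_t^N(e^{-\lambda/N})\big)$ solves $\partial_t G_t^N=-\psi_{\delta_N}(G_t^N)$ with $G_0^N=N(1-e^{-\lambda/N})$, and
\[
\E\big[\exp(-\lambda X_t^{N,x})\big]=\big(1-N^{-1}G_t^N\big)^{[Nx]} .
\]
In words, the branching mechanism of $X^{N,x}$ is $\psi_{\delta_N}$, up only to the harmless reparametrisation $\lambda\leftrightarrow N(1-e^{-\lambda/N})$ of the initial datum; since $\psi_{\delta_N}\to\psi$, the only candidate limit is the CSBP with branching mechanism $\psi$, that is, the solution of \eqref{XT}.

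It then remains to verify the hypotheses of Theorem~4.1 in \cite{DP}. The initial data converge, $[Nx]/N\to x$ and $N(1-e^{-\lambda/N})\to\lambda$. For $\psi_{\delta_N}\to\psi$ uniformly on compact subsets of $\R_+$ (and likewise for the first two derivatives, which govern the branching rate and the variance of $\nu_N$), the only term of $\psi_{\delta_N}$ not already of the form present in $\psi$ is the truncation error $\int_0^{\delta_N}(e^{-\lambda z}-1+\lambda z)\,\mu(dz)$; from $0\le e^{-\lambda z}-1+\lambda z\le\tfrac12\lambda^2z^2$ this is bounded by $\tfrac12\lambda^2\int_0^{\delta_N}z^2\,\mu(dz)$, which tends to $0$ uniformly for $\lambda$ in a compact set because $\int_0^1 z^2\,\mu(dz)<\infty$ by $(\mathbf H)$ and $\delta_N\downarrow0$. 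The non-degeneracy and non-explosion conditions required by Theorem~4.1 in \cite{DP} are in turn supplied by the two parts of $(\mathbf H)$, namely $\int_0^\infty(z\wedge z^2)\,\mu(dz)<\infty$ and $c>0$. Theorem~4.1 in \cite{DP} then yields the convergence in distribution of $\{X_t^{N,x},\,t\ge0\}$ to $\{X_t^x,\,t\ge0\}$ on $\mathcal{D}([0,\infty),\R_+)$, with the limit identified as the unique solution of the SDE \eqref{XT}.

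The step I expect to be the genuine obstacle -- and the reason it is convenient to quote Theorem~4.1 in \cite{DP} rather than to argue by hand -- is the tightness in $\mathcal{D}([0,\infty),\R_+)$ for the Skorohod topology. The jumps of $X^{N,x}$ are not uniformly small: at an event ``of size $z$'' one individual is replaced by a $\mathrm{Poisson}(Nz)$ number of children, producing an upward jump of size $\approx z$ that persists in the limit as a jump of the CSBP. Controlling these macroscopic jumps calls for estimates on the jump compensator $\tfrac1N\int_{\delta_N}^\infty\tfrac{(Nz)^j}{j!}e^{-Nz}\,\mu(dz)$ together with Aldous's criterion, which is precisely what $(\mathbf H)$ -- rather than a weaker moment assumption -- buys and what Theorem~4.1 in \cite{DP} already packages, so nothing new has to be done here. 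By contrast, convergence of the one-dimensional marginals follows at once from the Laplace-transform formula above together with $\psi_{\delta_N}\to\psi$, and it upgrades to convergence of all finite-dimensional marginals via the branching property together with the Markov property.
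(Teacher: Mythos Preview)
Your approach matches the paper's: the paper's entire proof is the single sentence that this proposition is a consequence of Theorem~4.1 in \cite{DP}, and you invoke the same theorem while additionally sketching how its hypotheses are verified. One minor slip worth noting: since $\psi_{\delta_N}$ in \eqref{deltan} carries no linear term, your assertion $\psi_{\delta_N}\to\psi$ is only literally correct when $b=0$; this, however, is equally a feature of the paper's setup (the approximating process $X^{N,x}$ has no deterministic drift), so it does not affect the validity of your reduction to \cite{DP}.
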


 \subsection{The approximate height process}
In this section, we show that the rescaled exploration process of the corresponding Galton-Watson genealogical forest of trees, converges in a functional sense, to the continuous height process associated with the CSBP. We will first need to write precisely the evolution of $\{H^N(s), s \ge 0\}$, the height process of the forest of trees representing the population described in section 3. To this end, to any $\delta > 0$, we define
\[Y_{\delta}(s)= -\left(b+\int_{\delta}^\infty z\mu(dz) \right)s + \sqrt{2c} B(s) + \int_0^s \int_{\delta}^\infty z \Pi(dr,dz).\]
and we associate $H_\delta$ the exploration process defined with the Lévy process $Y_\delta$. In other words, we have suppressed the small jumps, smaller than $\delta$, i.e \eqref{CHS1} takes the following form
\begin{equation}\label{cHs}
cH_\delta (s)= Y_{\delta}(s)- \inf_{0\le r\le s} Y_{\delta}(r)- \int_0^s \int_{\delta}^\infty \left(z + \inf_{r\le u\le s} Y_{\delta}(u) - Y_{\delta}(r)\right)^+\Pi(dr,dz).
\end{equation}
We consider for each $N\ge1$ a Poisson process $\{P_s^N, s\ge 0\}$ with intensity $\psi_{\delta_N} (N)$ independent from $\{Y(s), s \ge 0\}$. We denote by $\tau_1^N \le \tau_2^N \le \cdots$ the jump times of this Poisson process. The height process $\{H^N(s),s \ge 0\}$ is simply the piecewise affine function of slope $\pm 2N$ passing through the values
\[0, H_{\delta_N}(\tau_1^N), \min_{s\in [\tau_1^N,\tau_2^N]} H_{\delta_N}(s), H_{\delta_N}(\tau_2^N), \min_{s\in [\tau_2^N,\tau_3^N]} H_{\delta_N}(s), \cdots, H_{\delta_N}(\tau_n^N), \min_{s\in [\tau_n^N,\tau_{n+1}^N]} H_{\delta_N}(s), \cdots  \]
see Duquesne and Le Gall \cite{DLG}. We are ready to state the main result of this paper. Recall the process $H$ defined in \eqref{CHS1}.
\begin{theo}\label{THP}
 For any $s > 0$, $H^N(s) \longrightarrow H(s)$ in probability, locally uniformly in $s$, as $N \rightarrow \infty$.
\end{theo}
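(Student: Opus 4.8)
The plan is to prove Theorem~\ref{THP} by inserting the truncated height process $H_{\delta_{N}}$ of \eqref{cHs} between $H^{N}$ and $H$ and using
\[
\sup_{s\le T}\bigl|H^{N}(s)-H(s)\bigr|\le\sup_{s\le T}\bigl|H^{N}(s)-H_{\delta_{N}}(s)\bigr|+\sup_{s\le T}\bigl|H_{\delta_{N}}(s)-H(s)\bigr|,
\]
and showing both terms tend to $0$ in probability for every $T>0$; since $H$ (recalled after \eqref{CHS1}) and the piecewise affine $H^{N}$ have continuous versions, this is exactly the asserted locally uniform convergence in probability. The starting observation is the identification of $H^{N}$: applying the proposition recalled at the beginning of Section~3 (Theorem~3.2.1 of \cite{DLG}) to the branching mechanism $\psi_{\delta_{N}}$ with sampling rate $\alpha=\psi_{\delta_{N}}(N)$ --- for which $\psi_{\delta_{N}}^{-1}(\alpha)=N$, $\psi_{\delta_{N}}'(\psi_{\delta_{N}}^{-1}(\alpha))=\psi_{\delta_{N}}'(N)$ and whose offspring law is $\nu_{N}$ --- the forest carried by $H_{\delta_{N}}(\tau_{1}^{N}),H_{\delta_{N}}(\tau_{2}^{N}),\dots$ is precisely the genealogical forest of the population process $X^{N,x}$ of Section~\ref{sec:GWcont}, so that $H^{N}$ is the contour (exploration) process of that genealogical forest, run at speed $2N$.

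For the truncation term I would first note that $Y_{\delta_{N}}(s)-Y(s)=-\int_{0}^{s}\!\int_{0}^{\delta_{N}}z\,\overline{\Pi}(dr,dz)$ is a c\`adl\`ag $L^{2}$ martingale with $\E\bigl[(Y_{\delta_{N}}(s)-Y(s))^{2}\bigr]=s\int_{0}^{\delta_{N}}z^{2}\mu(dz)$; since $(\mathbf H)$ gives $\int_{0}^{1}z^{2}\mu(dz)<\infty$, Doob's inequality yields $\E\bigl[\sup_{s\le T}|Y_{\delta_{N}}(s)-Y(s)|^{2}\bigr]\le 4T\int_{0}^{\delta_{N}}z^{2}\mu(dz)\to0$, hence $Y_{\delta_{N}}\to Y$ locally uniformly in $L^{2}$, and the running infima $\inf_{r\le s}Y_{\delta_{N}}(r)$ and the ``infima from the right'' $\inf_{r\le u\le s}Y_{\delta_{N}}(u)$ converge uniformly to their $Y$-counterparts. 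There remains the integral term of \eqref{CHS1}, for which I would use the identity $cH(s)=Y(s)-\inf_{r\le s}Y(r)-\sum_{r\le s}(\Delta_{r}-D_{r}^{s})^{+}$, where $\Delta_{r}$ is the jump of $Y$ at $r$, $D_{r}^{s}=Y(r)-\inf_{r\le u\le s}Y(u)\ge0$, and $\sum_{r\le s}(\Delta_{r}-D_{r}^{s})^{+}=Y(s)-\inf_{r\le s}Y(r)-cH(s)<\infty$ (and the same for $H_{\delta_{N}}$ with the sum over jumps $\ge\delta_{N}$). The difference of the two sums splits into a part over jumps $\ge\delta_{N}$, handled by $\bigl|(\Delta_{r}-D_{r}^{s})^{+}-(\Delta_{r}-\tilde D_{r}^{s})^{+}\bigr|\le|D_{r}^{s}-\tilde D_{r}^{s}|\le2\sup_{u\le s}|Y_{\delta_{N}}(u)-Y(u)|$ on the finitely many nonzero terms, and a part over jumps $<\delta_{N}$, for which $(\Delta_{r}-D_{r}^{s})^{+}\le\Delta_{r}$; one then shows the total is negligible. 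Alternatively, the convergence $H_{\delta_{N}}\to H$ of the continuous objects may simply be quoted from \cite{LiPW}, where \eqref{CHS1} is constructed precisely as the $\delta\downarrow0$ limit of the finite-variation processes $H_{\delta}$.

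For the discrete--continuous term I would invoke the correspondence between a planar tree and its contour: the contour of the genealogical forest visits the leaves $\tau_{1}^{N}<\tau_{2}^{N}<\cdots$ in order, and between $\tau_{i}^{N}$ and $\tau_{i+1}^{N}$ it descends to their most recent common ancestor --- at level $\min_{[\tau_{i}^{N},\tau_{i+1}^{N}]}H_{\delta_{N}}$, by the defining property of a height process --- and climbs back, so it passes through exactly the values listed in the definition of $H^{N}$. As $\psi_{\delta_{N}}(N)\ge cN^{2}\to\infty$ (see \eqref{deltan}), the mesh $\max_{\tau_{i}^{N}\le T}(\tau_{i+1}^{N}-\tau_{i}^{N})\to0$ in probability, and together with the uniform continuity of $H_{\delta_{N}}$ on $[0,T]$ this forces each recorded minimum to be close to the neighbouring values $H_{\delta_{N}}(\tau_{i}^{N})$, so the piecewise affine interpolant --- whose slopes $\pm2N$ diverge --- cannot stray far from $H_{\delta_{N}}$ between sampled points. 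What really needs care is the time change: $H^{N}$ is a priori indexed by contour time, and one must check that contour time, accumulated at the rate dictated by the slopes $2N$ and the visited heights, agrees in the limit, uniformly on $[0,T]$, with the parameter $s$ of $H_{\delta_{N}}$; this is a law-of-large-numbers statement for the contour-clock increments, resting on Proposition~\ref{TH1} and the uniform control of $H_{\delta_{N}}$, and is the exact analogue, in the present ``downsampling'' setting, of the convergence of rescaled discrete contour processes to the continuous height process in \cite{DLG}.

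I expect the main obstacle to be the control of the small jumps in the truncation step, i.e.\ showing that $\sum_{r\le s,\ \Delta_{r}<\delta_{N}}(\Delta_{r}-D_{r}^{s})^{+}\to0$ uniformly on $[0,T]$ in probability. The naive bound by $\int_{0}^{s}\!\int_{0}^{\delta_{N}}z\,\Pi(dr,dz)$ is useless because $\int_{0}^{\delta_{N}}z\,\mu(dz)$ may be infinite, and no soft argument is available: for fixed $s$, $cH(s)=|\{\inf_{r\le u\le s}Y(u):0\le r\le s\}|$ is not a continuous functional of $Y$ for the uniform topology (a fine staircase is uniformly close to a ramp, yet the range of its right-infimum has much smaller measure), so one must exploit that the overshoots $(\Delta_{r}-D_{r}^{s})^{+}$ are typically far smaller than $\Delta_{r}$, using the L\'evy--It\^o decomposition or the exploration process of \cite{lGlJ,DLG}, and crucially the calibration $(\mathbf A)$ of $\delta_{N}$. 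If one instead concedes $H_{\delta_{N}}\to H$ to \cite{LiPW}, the bottleneck moves to the second step, namely the uniform-in-$s$ identification of the contour clock with the parameter $s$.
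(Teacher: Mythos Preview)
Your two--step strategy --- inserting $H_{\delta_N}$ between $H^N$ and $H$ --- is the paper's strategy as well, and your identification of the contour--clock as ``what really needs care'' is correct. But you have the weights inverted, and the part you leave as a one--line gesture is in fact the entire technical content of the proof.

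The convergence $H_{\delta_N}\to H$ locally uniformly, which you call the ``main obstacle'', is used in the paper without proof (in the tightness argument and in the final assembly); it is indeed the content of \cite{LiPW}, and assumption $(\mathbf A)$ plays no role in it. Conversely, the paper does \emph{not} attempt to compare $H^N(s)$ with $H_{\delta_N}(s)$ at the same $s$. It instead defines $K_N(s)$, the contour time at which $H^N$ reaches the value $H_{\delta_N}\bigl(\tau^N_{[\psi_{\delta_N}(N)s]}\bigr)$, records the exact identity \eqref{ega}, and decomposes
\[
|H^N(s)-H(s)|\le |H^N(s)-H^N(K_N(s))|
+\bigl|H_{\delta_N}\bigl(\tau^N_{[\psi_{\delta_N}(N)s]}\bigr)-H\bigl(\tau^N_{[\psi_{\delta_N}(N)s]}\bigr)\bigr|
+\bigl|H\bigl(\tau^N_{[\psi_{\delta_N}(N)s]}\bigr)-H(s)\bigr|.
\]
The last two terms are handled by $H_{\delta_N}\to H$, the continuity of $H$, and the elementary $\tau^N_{[\psi_{\delta_N}(N)s]}\to s$. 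The first term requires both $K_N(s)\to s$ (Proposition~\ref{KNs}) and a separate tightness estimate for $H^N$ (Proposition~\ref{HNtight}).

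The proof of $K_N(s)\to s$ is not the soft ``law--of--large--numbers statement for the contour--clock increments, resting on Proposition~\ref{TH1}'' that you suggest; Proposition~\ref{TH1} is never invoked. The paper splits $K_N=K_1^N+K_2^N+K_3^N$. The term $K_1^N$, measuring the discrepancy between increments of $cH_{\delta_N}$ and of $Y_{\delta_N}^{\mathrm{ref}}$, is bounded via the total variation of the jump--compensation term $U_{\delta_N}$ in \eqref{cHs}, and it is precisely \emph{here} that assumption $(\mathbf A)$ is used, through $\frac{1}{N}\int_0^s\int_{\delta_N}^\infty z\,\Pi(dr,dz)\to0$. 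The term $K_2^N\to s$ relies on a distributional input from \cite{Ber} (the running maximum of $-Y_{\delta_N}$ at an independent $\mathrm{Exp}(\psi_{\delta_N}(N))$ time is $\mathrm{Exp}(N)$), followed by a sandwich argument conditioning on whether $Y_{\delta_N}^{\mathrm{ref}}(\tau_k^N)$ exceeds a small threshold. None of this is visible in your sketch, and your placement of $(\mathbf A)$ in the small--jump step of $H_{\delta_N}\to H$ is misplaced.
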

To prove this theorem, we will proceed in several steps. So, for any $s > 0$, we define
\[Y^{ref}(s)=Y(s) - \inf_{0\le r\le s} Y(r) \quad \mbox{and} \quad Y_{\delta_N}^{ref}(s)=Y_{\delta_N}(s) - \inf_{0\le r\le s} Y_{\delta_N}(r) .\]
From now on, we do as if $Y^{ref}$  and $Y_{\delta_N}^{ref}$ were deterministic, only $P^N$ (and the $\tau_k^N$’s) are random. 

 A first preparation for the proof of Theorem \ref{THP} is
\begin{lemma}\label{GG}
For any $h\in\mathcal{C}(\R_+; [0,1])$,
\[\frac{1}{\psi_{\delta_N}(N)} \sum_{k=1}^{[\psi_{\delta_N}(N)s]} h(Y_{\delta_N}^{ref}(\tau_k^N)) \longrightarrow \int_0^s h(Y^{ref}(r))dr \quad in \ probability,\ as \ N\rightarrow \infty. \]
\end{lemma}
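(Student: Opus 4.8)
The plan is to recognize the left-hand side as an ergodic/law-of-large-numbers average along the jump times of a Poisson process, and to transfer it to a Riemann-integral limit using the independence of $P^N$ from $Y_{\delta_N}$ together with the fact that $Y_{\delta_N}^{ref}$ approximates $Y^{ref}$. First I would fix the realization of $Y^{ref}$ and $Y_{\delta_N}^{ref}$ (as the statement instructs) and condition only on these paths, so that the randomness is entirely carried by the homogeneous Poisson process $P^N$ of intensity $\psi_{\delta_N}(N)$. Writing $\lambda_N=\psi_{\delta_N}(N)$, the sum $\frac{1}{\lambda_N}\sum_{k=1}^{[\lambda_N s]} h(Y_{\delta_N}^{ref}(\tau_k^N))$ is, up to the deterministic reindexing by $[\lambda_N s]$ rather than $\{\tau_k^N\le s\}$, a Poissonian Riemann sum: the spacings $\tau_k^N-\tau_{k-1}^N$ are i.i.d.\ exponential with mean $1/\lambda_N\to 0$, so the empirical measure $\frac{1}{\lambda_N}\sum_{k\ge 1}\delta_{\tau_k^N}$ converges to Lebesgue measure on compacts. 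Concretely I would compute the conditional mean,
\[
\E\!\left[\frac{1}{\lambda_N}\sum_{k=1}^{[\lambda_N s]} h(Y_{\delta_N}^{ref}(\tau_k^N))\,\Big|\,Y_{\delta_N}^{ref}\right]
=\frac{1}{\lambda_N}\sum_{k=1}^{[\lambda_N s]}\E\big[h(Y_{\delta_N}^{ref}(\tau_k^N))\big],
\]
using that $\tau_k^N$ has the Gamma$(k,\lambda_N)$ law, and show it converges to $\int_0^s h(Y^{ref}(r))\,dr$; then I would bound the conditional variance by $O(1/\lambda_N)$ and invoke Chebyshev to upgrade to convergence in probability, finally removing the conditioning by dominated convergence since $h$ is bounded by $1$.

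For the mean computation, the key point is that $\tau_k^N/\lambda_N$ concentrates around $k/\lambda_N$ with fluctuations of order $\sqrt{k}/\lambda_N$, so for $k\le[\lambda_N s]$ the points $\tau_k^N$ effectively fill $[0,s]$ uniformly; more cleanly, one can use that the Poisson counting function $N_s^N=\#\{k:\tau_k^N\le s\}$ satisfies $N_s^N/\lambda_N\to s$ uniformly on compacts (by Doob's inequality, since it is a martingale after centering, with quadratic variation $\lambda_N s$), and rewrite the sum with the random upper limit $N_s^N$ at negligible cost because $|[\lambda_N s]-N_s^N|=o(\lambda_N)$ and $h$ is bounded. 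With the random limit, $\frac{1}{\lambda_N}\sum_{k:\tau_k^N\le s} g(\tau_k^N)=\frac{1}{\lambda_N}\int_0^s g(r)\,dP_r^N$ for any bounded measurable $g$, and this converges in $L^2$ to $\int_0^s g(r)\,dr$ by the isometry for the compensated Poisson integral. Taking $g=h(Y_{\delta_N}^{ref}(\cdot))$ gives $\frac{1}{\lambda_N}\int_0^s h(Y_{\delta_N}^{ref}(r))\,dr+o_{\P}(1)$ after subtracting the martingale part.

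It then remains to replace $Y_{\delta_N}^{ref}$ by $Y^{ref}$ inside the integral: since $Y_{\delta_N}\to Y$ uniformly on compacts (the suppressed jumps have total mass $\frac{1}{N}\int_{\delta_N}^\infty z\,\mu(dz)\to 0$ in the relevant sense — here one should be slightly careful and use that $Y_{\delta_N}$ is obtained from $Y$ by removing jumps of size $<\delta_N$ and that the resulting discrepancy goes to $0$ in probability locally uniformly, which also forces $Y_{\delta_N}^{ref}\to Y^{ref}$ locally uniformly by continuity of the reflection map), and since $h$ is uniformly continuous on compacts, $\int_0^s h(Y_{\delta_N}^{ref}(r))\,dr\to\int_0^s h(Y^{ref}(r))\,dr$. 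I expect the main obstacle to be exactly this last transfer done carefully: controlling the modulus of continuity of $h$ composed with $Y_{\delta_N}^{ref}$ uniformly in $N$, and handling the interplay between the deterministic index $[\lambda_N s]$ in the statement and the natural random index $N_s^N$ — both are routine but require the Doob/Chebyshev estimates above and the observation that $\lambda_N=\psi_{\delta_N}(N)\to\infty$ (which follows from $c>0$, giving $\lambda_N\ge cN$). Everything else is a standard second-moment argument.
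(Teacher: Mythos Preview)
Your proposal is correct and, once you settle on the ``cleaner'' route in your second paragraph, it coincides with the paper's proof: the paper decomposes the error into exactly the three pieces you describe --- replacing the deterministic index $[\lambda_N s]$ by the random index $P^N_s$ (your $N^N_s$) via $|P^N_s-\lambda_N s|=o(\lambda_N)$, killing the compensated Poisson integral $\frac{1}{\lambda_N}\int_0^s h(Y_{\delta_N}^{ref}(r))\,[dP^N_r-\lambda_N\,dr]$ by a variance estimate, and passing from $Y_{\delta_N}^{ref}$ to $Y^{ref}$ by continuity of $h$ and locally uniform convergence. Two small points: your initial detour through the Gamma density of $\tau^N_k$ is unnecessary, and the convergence $Y_{\delta_N}\to Y$ comes simply from $\delta_N\downarrow 0$ (the suppressed compensated small-jump martingale vanishes), not from assumption~$(\mathbf{A})$ as your parenthetical suggests.
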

\begin{proof}
We have
\begin{align*}
\frac{1}{\psi_{\delta_N}(N)} \sum_{k=1}^{[\psi_{\delta_N}(N)s]} h(Y_{\delta_N}^{ref}(\tau_k^N))-  \int_0^s h(Y^{ref}(r))dr&= \frac{1}{\psi_{\delta_N}(N)} \sum_{k=1}^{[\psi_{\delta_N}(N)s]} h(Y_{\delta_N}^{ref}(\tau_k^N))- \frac{1}{\psi_{\delta_N}(N)}   \int_{[0,s]} h(Y_{\delta_N}^{ref}(r))dP_r^N\\& +\frac{1}{\psi_{\delta_N}(N)}   \int_{[0,s]} h(Y_{\delta_N}^{ref}(r))dP_r^N   -  \int_0^s h(Y_{\delta_N}^{ref}(r))dr\\&+
 \int_0^s h(Y_{\delta_N}^{ref}(r))dr- \int_0^s h(Y^{ref}(r))dr\\
 &=A_N(s)+B_N(s)+C_N(s).
 \end{align*}
 First $C_N(s)\rightarrow 0$ follows readily from $\sup_{0\le r\le s} \left|  h(Y^{ref}(r))- h(Y_{\delta_N}^{ref}(r)) \right| \rightarrow 0$, as $N\rightarrow \infty$, since $h$ is continuous and  $\sup_{0\le r\le s} \left|  Y^{ref}(r)- Y_{\delta_N}^{ref}(r) \right| \rightarrow 0$, as $N\rightarrow \infty$. Next we have 
$B^N(s)= \frac{1}{\psi_{\delta_N}(N)}   \int_{[0,s]} h(Y_{\delta_N}^{ref}(r)) [dP_r^N   -  \psi_{\delta_N}(N)dr ]$.
We have $\E[B^N(s)] = 0$, while $\mathbb{V}ar(B^N(s))=  \frac{1}{\psi_{\delta_N}(N)} \E  \int_0^s h(Y_{\delta_N}^{ref}(r))^2dr,$ which clearly tends to $0$ as $N\rightarrow \infty$, since $h$ is bounded and $\psi_{\delta_N} (N) \rightarrow \infty$. Consequently $B^N(s)\rightarrow 0$ in probability, as $N \rightarrow \infty$. It remains to consider $A^N$. Since  $0\le h(y) \le 1$,  $|A^N(s)|\le  \frac{1}{\psi_{\delta_N}(N)} |P_s^N   -  \psi_{\delta_N}(N)s | \longrightarrow 0$
$a.s.$ from the strong law of large numbers. The result follows.
 \end{proof}
 
 For any $N\ge 1$, $s>0$, we define
 \begin{equation}\label{314}
 K_N(s)= \frac{1}{2N} H_{\delta_N} (\tau_1^N)+  \frac{1}{2N} \sum_{k=1}^{[\psi_{\delta_N}(N)s]} \left\{ (H_{\delta_N}(\tau_k^N)- \min_{r\in [\tau_k^N,\tau_{k+1}^N] } H_{\delta_N}(r) )+  (H_{\delta_N}(\tau_{k+1}^N)- \min_{r\in [\tau_k^N,\tau_{k+1}^N] } H_{\delta_N}(r) ) \right\}
  \end{equation}
It is not hard to see that $K^N(s)$ is the time taken by the process $H^N$ to reach the point $H_{\delta_N} \left( \tau^N_{[\psi_{\delta_N}(N)s]}\right)$.

So we get by our construction that
\begin{equation}\label{ega}
H^N\left(K_N(s) \right)=H_{\delta_N} \left( \tau^N_{[\psi_{\delta_N}(N)s]}\right). 
\end{equation}
For the proof of Theorem \ref{THP} we will need the two following Propositions.
\begin{proposition}\label{tauNs}
For any $s>0$, $\tau^N_{[\psi_{\delta_N}(N)s]} \longrightarrow s$ $a.s$, as $N\rightarrow \infty$.
\end{proposition}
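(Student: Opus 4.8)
The plan is to use that $\tau^N_{[\psi_{\delta_N}(N)s]}$ is a sum of $[\psi_{\delta_N}(N)s]$ independent exponential waiting times, each of rate $\psi_{\delta_N}(N)$, hence a Gamma random variable whose mean $[\psi_{\delta_N}(N)s]/\psi_{\delta_N}(N)$ converges to $s$ and whose fluctuations around that mean vanish fast enough to yield an almost sure statement. Writing $m_N:=[\psi_{\delta_N}(N)s]$, one has
\[
\E\big[\tau^N_{m_N}\big]=\frac{m_N}{\psi_{\delta_N}(N)},\qquad \mathbb{V}ar\big(\tau^N_{m_N}\big)=\frac{m_N}{\psi_{\delta_N}(N)^2}\le\frac{s}{\psi_{\delta_N}(N)}.
\]

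First I would record two elementary facts. Since the integrand in \eqref{deltan} is nonnegative, $\psi_{\delta_N}(N)\ge cN^2$, so $\psi_{\delta_N}(N)\to\infty$ (this is where Assumption $(\bf H)$, namely $c>0$, enters). Consequently, from $s-1/\psi_{\delta_N}(N)<m_N/\psi_{\delta_N}(N)\le s$ we get $\E[\tau^N_{m_N}]\to s$, and the variance bound sharpens to $\mathbb{V}ar(\tau^N_{m_N})\le s/(cN^2)$, which is summable in $N$.

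It then remains to upgrade this $L^2$ control to an almost sure limit. For each $\eps>0$, Chebyshev's inequality gives $\sum_{N\ge1}\P\big(|\tau^N_{m_N}-\E[\tau^N_{m_N}]|>\eps\big)\le\sum_{N\ge1}s/(cN^2\eps^2)<\infty$, so the first Borel--Cantelli lemma — which does not require any independence between the processes $P^N$ for distinct $N$ — gives $\limsup_N|\tau^N_{m_N}-\E[\tau^N_{m_N}]|\le\eps$ a.s.; intersecting over a sequence $\eps\downarrow0$ yields $\tau^N_{m_N}-\E[\tau^N_{m_N}]\to0$ a.s., and combining with $\E[\tau^N_{m_N}]\to s$ finishes the proof. (If all the $P^N$ are built from a single i.i.d.\ sequence $(\xi_j)_{j\ge1}$ of standard exponentials via $\tau^N_{m_N}=\psi_{\delta_N}(N)^{-1}\sum_{j=1}^{m_N}\xi_j$, one can instead write $\tau^N_{m_N}=\tfrac{m_N}{\psi_{\delta_N}(N)}\cdot\tfrac1{m_N}\sum_{j=1}^{m_N}\xi_j$ and invoke the strong law of large numbers together with $m_N\to\infty$.) I do not foresee a genuine obstacle; the only point that needs care is that the Poisson rate $\psi_{\delta_N}(N)$ itself changes with $N$, which is precisely why I would route the a.s. claim through the summable-variance/Borel--Cantelli argument rather than through a single strong law for a fixed process.
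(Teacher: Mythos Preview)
Your proof is correct. Both you and the paper start from the same representation: $\tau^N_{[\psi_{\delta_N}(N)s]}=\psi_{\delta_N}(N)^{-1}\sum_{k=1}^{[\psi_{\delta_N}(N)s]}\xi_k$ with $\xi_k$ i.i.d.\ standard exponentials. The paper then simply invokes the strong law of large numbers, which tacitly uses the construction you describe in your parenthetical remark (a \emph{single} i.i.d.\ sequence $(\xi_k)_{k\ge1}$ driving all the $P^N$ simultaneously). Your primary route via $\psi_{\delta_N}(N)\ge cN^2$, the summable variance bound $s/(cN^2)$, and Borel--Cantelli is slightly longer but more robust: it delivers the a.s.\ conclusion regardless of how the Poisson processes $P^N$ are coupled across $N$, whereas the bare SLLN argument needs that specific coupling to make sense of a diagonal limit over $N$. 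In short, the two arguments are the same in spirit; yours is the more carefully stated version, and you correctly anticipated the paper's shortcut as the special case where a common exponential sequence is used.
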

\begin{proof}
It is easy to see that $\tau^N_{[\psi_{\delta_N}(N)s]}= \frac{1}{\psi_{\delta_N}(N)} \sum_{k=1}^{[\psi_{\delta_N}(N)s]} \xi_k,$ where $(\xi_k)_{k\ge1}$ is an sequence of independent and identically distributed (i.i.d)$\sim Exp(1)$. The desired result follows easily from the law of large numbers.
\end{proof}
\begin{proposition}\label{KNs}
 For any $s > 0$, $K_N(s) \longrightarrow s$ in probability, as $N \rightarrow \infty$.
\end{proposition}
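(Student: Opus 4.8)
The plan is to turn $K_N(s)$ into a Cesàro-type average of the local descents of $H_{\delta_N}$ sampled along the Poisson points, and then to split that average into a conditional mean plus an orthogonal fluctuation. First I would expand the brace-terms in \eqref{314}: with $m_k^N:=\min_{r\in[\tau_k^N,\tau_{k+1}^N]}H_{\delta_N}(r)$ and $n:=[\psi_{\delta_N}(N)s]$, reindexing $\sum_{k=1}^{n} H_{\delta_N}(\tau_{k+1}^N)=\sum_{k=2}^{n+1}H_{\delta_N}(\tau_k^N)$ and cancelling terms gives
\[
K_N(s)=\frac1N\sum_{k=1}^{n}\bigl(H_{\delta_N}(\tau_k^N)-m_k^N\bigr)+\frac{1}{2N}H_{\delta_N}\bigl(\tau_{n+1}^N\bigr).
\]
By \eqref{cHs}, $cH_{\delta_N}\le Y_{\delta_N}^{ref}$, and since $Y_{\delta_N}^{ref}\to Y^{ref}$ uniformly on compacts, $\sup_{[0,s+1]}H_{\delta_N}$ is bounded in $N$; as $\tau_{n+1}^N\to s$ a.s.\ (Proposition \ref{tauNs} plus one extra exponential increment), the last term tends to $0$. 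So it suffices to prove $\frac1N\sum_{k=1}^{n}(H_{\delta_N}(\tau_k^N)-m_k^N)\to s$ in probability.

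Working conditionally on $Y_{\delta_N}$ (deterministic, by the convention in force), set $\mathcal{G}_k:=\sigma(\tau_1^N,\dots,\tau_k^N)$ and $g_N(t):=\E\bigl[H_{\delta_N}(t)-\min_{r\in[t,t+\mathbf{e}]}H_{\delta_N}(r)\bigr]$, where $\mathbf{e}\sim\mathrm{Exp}(\psi_{\delta_N}(N))$ is independent of $Y_{\delta_N}$. Since $\tau_{k+1}^N-\tau_k^N\sim\mathrm{Exp}(\psi_{\delta_N}(N))$ is independent of $\mathcal{G}_k$, $\E[H_{\delta_N}(\tau_k^N)-m_k^N\mid\mathcal{G}_k]=g_N(\tau_k^N)$, hence
\[
\frac1N\sum_{k=1}^{n}\bigl(H_{\delta_N}(\tau_k^N)-m_k^N\bigr)=\frac1N\sum_{k=1}^{n}g_N(\tau_k^N)+\frac1N\sum_{k=1}^{n}D_k,\qquad D_k:=\bigl(H_{\delta_N}(\tau_k^N)-m_k^N\bigr)-g_N(\tau_k^N).
\]
The $D_k$ are orthogonal ($D_k$ is $\mathcal{G}_{k+1}$-measurable with $\E[D_k\mid\mathcal{G}_k]=0$, and $\mathcal{G}_{k+1}\subseteq\mathcal{G}_j$ for $j>k$), so $\E[(\tfrac1N\sum_kD_k)^2]\le\tfrac1{N^2}\sum_{k=1}^n\E[(H_{\delta_N}(\tau_k^N)-m_k^N)^2]$. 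Here $H_{\delta_N}(\tau_k^N)-m_k^N\le\omega_N(\ell_N)$ with $\ell_N:=\max_{k\le n}(\tau_{k+1}^N-\tau_k^N)\to0$ a.s.\ and $\omega_N$ the modulus of continuity of $H_{\delta_N}$ on $[0,s+1]$; since $H_{\delta_N}\to H$ locally uniformly and $H$ is uniformly continuous, $\omega_N(\ell_N)\to0$ boundedly, so $\varepsilon_N:=\E[\omega_N(\ell_N)^2]\to0$. Finally assumption $(\mathbf{A})$ forces $\int_{\delta_N}^\infty z\,\mu(dz)=o(N)$ (as $z\le1$ on $[\delta_N,1]$), whence $\psi_{\delta_N}(N)=cN^2+o(N^2)$ and $n=O(N^2)$, so $\tfrac1{N^2}\sum_{k=1}^n\E[(H_{\delta_N}(\tau_k^N)-m_k^N)^2]\le n\varepsilon_N/N^2\to0$: the fluctuation term vanishes in $L^2$.

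It remains to show $\frac1N\sum_{k=1}^{n}g_N(\tau_k^N)\to s$, and this is where the main difficulty sits. The key claim is that, for each fixed $\varepsilon>0$, $g_N(t)=(1+o(1))\,N/\psi_{\delta_N}(N)$ uniformly over $\{t:H_{\delta_N}(t)\ge\varepsilon\}$, while $0\le g_N(t)\le C\,N/\psi_{\delta_N}(N)$ for all $t$. Concretely $g_N(t)=\int_0^\infty\E[\exp(-\psi_{\delta_N}(N)\sigma_x(t))]\,dx$ with $\sigma_x(t)$ the first time after $t$ at which $H_{\delta_N}$ has dropped by $x$, and the claim amounts to $\tfrac{\psi_{\delta_N}(N)}{N}g_N(t)=\int_0^\infty\E[\exp(-\psi_{\delta_N}(N)\sigma_{Nu/\psi_{\delta_N}(N)}(t))]\,du\to1$ uniformly in such $t$; i.e.\ $H_{\delta_N}$ descends, away from $0$, at the Brownian rate ("speed" $\asymp N$). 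This is a statement on the local structure of $H_{\delta_N}$: since $c>0$ and, by $(\mathbf{A})$, $\psi_{\delta_N}(N)\sim cN^2$ is Brownian-dominated, the suppressed jumps affect the descent only to lower order, so the rate is governed by the Brownian part of $Y_{\delta_N}$ in \eqref{cHs}; alternatively it can be read off the Galton--Watson forest of Proposition 3.1 (applied with $\psi_{\delta_N}$, $\alpha=\psi_{\delta_N}(N)$), under which $H_{\delta_N}(\tau_k^N)-m_k^N$ is a sum of a combinatorially-determined number $G_k$ of i.i.d.\ $\mathrm{Exp}(\psi_{\delta_N}^{\prime}(N))$ edge-lengths with $\E[G_k\mid\mathcal{G}_k]\approx N\psi_{\delta_N}^{\prime}(N)/\psi_{\delta_N}(N)=O(1)$ (the forest being critical, as $\psi_{\delta_N}^{\prime}(0)=0$). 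Granting the claim, I would write $\frac1N\sum_{k=1}^ng_N(\tau_k^N)=\frac1{\psi_{\delta_N}(N)}\sum_{k=1}^n\tfrac{\psi_{\delta_N}(N)}{N}g_N(\tau_k^N)$, split according to $H_{\delta_N}(\tau_k^N)\gtrless\varepsilon$, apply the analogue of Lemma \ref{GG} for $H_{\delta_N}$ (valid because $H_{\delta_N}\to H$ locally uniformly) to obtain $\tfrac1{\psi_{\delta_N}(N)}\#\{k\le n:H_{\delta_N}(\tau_k^N)\le\varepsilon\}\to\int_0^s\mathbf{1}_{\{H(r)\le\varepsilon\}}dr$ and the same for $>\varepsilon$, and then let $\varepsilon\downarrow0$, using that $\{r\le s:H(r)=0\}$ is Lebesgue-null so that $\int_0^s\mathbf{1}_{\{H(r)>0\}}dr=s$. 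The principal obstacle is precisely this uniform "Brownian descent" estimate for $g_N$; the remaining ingredients are the telescoping identity, the $L^2$-orthogonality bound, and an adaptation of Lemma \ref{GG}.
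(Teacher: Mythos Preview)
Your telescoping identity and the disposal of the boundary term $\frac{1}{2N}H_{\delta_N}(\tau_{n+1}^N)$ are correct and coincide with the paper's first reduction. The orthogonality bound on $\sum D_k$ is also fine. The gap is exactly where you place it, and it is a genuine one. Once you condition on $Y_{\delta_N}$, your $g_N(t)$ is a \emph{quenched} object: $g_N(t)=\int_0^\infty\exp\bigl(-\psi_{\delta_N}(N)\,\sigma_x(t)\bigr)\,dx$, where $\sigma_x(t)$ is the deterministic first time after $t$ at which the fixed continuous path $H_{\delta_N}$ has dropped by $x$. The assertion $g_N(t)=(1+o(1))N/\psi_{\delta_N}(N)$ uniformly on $\{H_{\delta_N}(t)\ge\varepsilon\}$ therefore says that this particular trajectory descends from every point, on the scale $1/\psi_{\delta_N}(N)$ in time, exactly like reflected Brownian motion of variance $2c$. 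That is a fine structural regularity of the height process, not something the heuristic ``$\psi_{\delta_N}(N)\sim cN^2$ is Brownian-dominated'' (a statement about the \emph{law} of $Y_{\delta_N}$) can deliver for a frozen path; nor does the Galton--Watson picture give it without further work (you would still have to identify and control the conditional law of the number of edges from node $k$ down to its most recent common ancestor with node $k+1$). The companion uniform bound $g_N(t)\le CN/\psi_{\delta_N}(N)$ is equally pathwise and equally unproved.

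The paper sidesteps the quenched descent estimate by two moves you do not make. First, it writes $cH_{\delta_N}=Y_{\delta_N}^{ref}-U_{\delta_N}$ and notes that the total variation of $U_{\delta_N}$ on $[0,s]$ is at most $2\int_0^s\int_{\delta_N}^\infty z\,\Pi(dr,dz)$; since the intervals $[\tau_k^N,\tau_{k+1}^N]$ are disjoint, the whole sum $\frac{1}{cN}\sum_k\bigl[(cH_{\delta_N}(\tau_k^N)-\min cH_{\delta_N})-(Y_{\delta_N}^{ref}(\tau_k^N)-\min Y_{\delta_N}^{ref})\bigr]$ is bounded in absolute value by $\frac{2}{cN}\int_0^s\int_{\delta_N}^\infty z\,\Pi(dr,dz)\to0$ under $(\mathbf A)$. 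Second --- and this is what your conditional setup forbids --- it \emph{drops} the fiction that $Y_{\delta_N}$ is frozen and uses the strong Markov property of the L\'evy process at $\tau_k^N$: the drop $Y_{\delta_N}^{ref}(\tau_k^N)-\min_{[\tau_k^N,\tau_{k+1}^N]}Y_{\delta_N}^{ref}$ has the law of $\Gamma'(\mathbf e)\wedge Y_{\delta_N}^{ref}(\tau_k^N)$, where $\Gamma'(\mathbf e)=\max_{0\le r\le\mathbf e}(-Y'_{\delta_N}(r))$ with $Y'_{\delta_N}$ a fresh copy and $\mathbf e\sim\mathrm{Exp}(\psi_{\delta_N}(N))$ independent. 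By Bertoin's fluctuation identity (Corollary~\ref{COR1}), $\Gamma'(\mathbf e)\sim\mathrm{Exp}(N)$: this is precisely the ``rate $1/N$'' you wanted, obtained as an \emph{annealed} distributional fact rather than a pathwise one. The truncation at $Y_{\delta_N}^{ref}(\tau_k^N)$ is then handled by a $g_\epsilon/h_\epsilon$ sandwich and Lemma~\ref{GG}, much as you sketch at the end. In short, the missing idea is to move from $H_{\delta_N}$ to $Y_{\delta_N}^{ref}$ at a bounded-variation cost, so that the descent becomes an exponential random variable with explicit parameter rather than a functional of a fixed trajectory.
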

\begin{proof}
 Let us rewrite \eqref{314} in the form
  \begin{align*}
 K_N(s)&= \frac{1}{N} \sum_{k=1}^{[\psi_{\delta_N}(N)s]} (H_{\delta_N}(\tau_k^N)- \min_{r\in [\tau_k^N,\tau_{k+1}^N] } H_{\delta_N}(r) ) +  \frac{1}{2N} \left( H_{\delta_N} (\tau_1^N) +   \sum_{k=1}^{[\psi_{\delta_N}(N)s]} (H_{\delta_N}(\tau_{k+1}^N) -H_{\delta_N}(\tau_{k}^N)\right)\\&= K_1^N(s)+ K_2^N(s)+ K_3^N(s), \quad \mbox{with}
 \end{align*}
\[K_1^N(s)= \frac{1}{cN} \sum_{k=1}^{[\psi_{\delta_N}(N)s]} \left\{ (cH_{\delta_N}(\tau_k^N)- \min_{r\in [\tau_k^N,\tau_{k+1}^N] } cH_{\delta_N}(r) ) -  (Y_{\delta_N}^{ref}(\tau_k^N)- \min_{r\in [\tau_k^N,\tau_{k+1}^N] } Y_{\delta_N}^{ref}(r) ) \right\}, \]
\[ K_2^N(s)=  \frac{1}{cN} \sum_{k=1}^{[\psi_{\delta_N}(N)s]}   (Y_{\delta_N}^{ref}(\tau_k^N)- \min_{r\in [\tau_k^N,\tau_{k+1}^N] } Y_{\delta_N}^{ref}(r) ) \quad \mbox{and} \quad K_3^N(s)= \frac{1}{2N} H_{\delta_N} \left( \tau^N_{[\psi_{\delta_N}(N)s]}\right).  \]
A standard argument combined with Proposition \ref{tauNs} yields $K_3^N (s) \rightarrow 0$ $a.s$, as $N \rightarrow \infty$, for any $s > 0$. The Proposition is now a consequence of the two next Propositions.
\end{proof}
\begin{proposition}
 For any $s > 0$, $K_1^N(s) \longrightarrow 0$ in probability, as $N \rightarrow \infty$.
\end{proposition}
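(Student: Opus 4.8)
The plan is to reduce the whole statement to a single pathwise estimate on the jump--correction term appearing in \eqref{cHs}. Write $I_k:=[\tau_k^N,\tau_{k+1}^N]$, $a_k:=cH_{\delta_N}(\tau_k^N)-\min_{I_k}cH_{\delta_N}$ and $b_k:=Y_{\delta_N}^{ref}(\tau_k^N)-\min_{I_k}Y_{\delta_N}^{ref}$, so that $cN\,K_1^N(s)=\sum_{k=1}^{[\psi_{\delta_N}(N)s]}(a_k-b_k)$. Directly from \eqref{cHs} one has the identity
\[ cH_{\delta_N}(r)=Y_{\delta_N}^{ref}(r)-J_N(r),\qquad J_N(r):=\int_0^r\!\!\int_{\delta_N}^\infty\Big(z+\inf_{t\le u\le r}Y_{\delta_N}(u)-Y_{\delta_N}(t)\Big)^{+}\Pi(dt,dz)\ge 0. \]
First I would prove the elementary pathwise bound $|a_k-b_k|\le \sup_{I_k}J_N-\inf_{I_k}J_N$. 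Substituting $cH_{\delta_N}=Y_{\delta_N}^{ref}-J_N$ into $a_k$ gives $a_k-b_k=-J_N(\tau_k^N)+\big(\min_{I_k}g-\min_{I_k}(g-f)\big)$ with $g:=Y_{\delta_N}^{ref}$, $f:=J_N$; and for any bounded $f,g$ on a set $I$ one has $\min_I f\le\min_I g-\min_I(g-f)\le\max_I f$ (evaluate $g-f$ at the two relevant argmins). Hence $\inf_{I_k}J_N-J_N(\tau_k^N)\le a_k-b_k\le \sup_{I_k}J_N-J_N(\tau_k^N)$, and since $\inf_{I_k}J_N\le J_N(\tau_k^N)\le\sup_{I_k}J_N$ the claimed bound follows.

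The core of the argument is then to show that the sum $\sum_k\big(\sup_{I_k}J_N-\inf_{I_k}J_N\big)$ does not accumulate with the number $[\psi_{\delta_N}(N)s]$ of marks. The key point is that $r\mapsto J_N(r)$ is continuous and nonincreasing on every time interval free of jumps of $Y_{\delta_N}$ — there the contributing atoms of $\Pi$ are frozen and each integrand $(z+\inf_{[t,r]}Y_{\delta_N}-Y_{\delta_N}(t))^{+}$ is nonincreasing in $r$ — while at a jump time $t_i$ of $Y_{\delta_N}$ of size $z_i\ge\delta_N$ the process $J_N$ jumps upward by $z_i$ (an upward jump of $Y_{\delta_N}$ leaves the earlier integrands unchanged). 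Consequently $\sup_{I_k}J_N-\inf_{I_k}J_N\le \mathrm{TV}_{I_k}(J_N)=Z_k+D_k$, where $Z_k:=\sum_{t_i\in I_k,\,z_i\ge\delta_N}z_i$ is the total upward jump of $J_N$ on $I_k$ and $D_k\ge0$ is its total continuous decrease there. Putting $\mathcal Z_N:=\int_0^{\tau^N_{[\psi_{\delta_N}(N)s]+1}}\!\int_{\delta_N}^\infty z\,\Pi(dt,dz)$, we get $\sum_kZ_k\le\mathcal Z_N$, and $\sum_kD_k$ equals the total continuous decrease of $J_N$ over $[\tau_1^N,\tau^N_{[\psi_{\delta_N}(N)s]+1}]$, which is at most the total upward jump of $J_N$ over $[0,\tau^N_{[\psi_{\delta_N}(N)s]+1}]$ because $J_N\ge0$ and $J_N(0)=0$; hence $\sum_kD_k\le\mathcal Z_N$ as well. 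Therefore $|K_1^N(s)|\le\frac{2}{cN}\,\mathcal Z_N$.

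Finally I would check that $\mathcal Z_N/N\to0$ in probability. By the argument of Proposition~\ref{tauNs}, $\tau^N_{[\psi_{\delta_N}(N)s]+1}\to s$ a.s., so on an event of probability tending to $1$ one has $\mathcal Z_N\le\int_0^{s+1}\!\int_{\delta_N}^\infty z\,\Pi(dt,dz)$, whose expectation is $(s+1)\int_{\delta_N}^\infty z\,\mu(dz)$; since $\tfrac1N\int_{\delta_N}^\infty z\,\mu(dz)\to0$ by the consequence of $(\mathbf A)$ recorded after its statement, $\tfrac{2}{cN}\int_0^{s+1}\!\int_{\delta_N}^\infty z\,\Pi\to0$ in $L^1$, hence in probability, and therefore $\tfrac{2}{cN}\mathcal Z_N\to0$ in probability, which proves the Proposition. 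I expect the only real obstacle to be the second paragraph: recognizing that the discrepancy between the contour increments of $cH_{\delta_N}$ and of $Y_{\delta_N}^{ref}$ is governed entirely by the oscillation of $J_N$, and that, $J_N$ being monotone off the (finitely many) jumps of $Y_{\delta_N}$ and growing only there, these oscillations telescope so that their sum is $O(\mathcal Z_N)$ rather than of order $[\psi_{\delta_N}(N)s]$; the remaining inputs are just the law of large numbers and Markov's inequality.
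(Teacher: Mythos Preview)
Your proof is correct and follows essentially the same route as the paper's: both use the identity $cH_{\delta_N}=Y_{\delta_N}^{ref}-J_N$ (the paper writes $U_{\delta_N}$ for your $J_N$), bound each term $|a_k-b_k|$ by the oscillation of $J_N$ on $I_k$, sum these oscillations into the total variation of $J_N$, and then invoke the bound $\mathrm{TV}(J_N)\le 2\int\!\!\int_{\delta_N}^\infty z\,\Pi$ together with assumption $(\mathbf A)$. Your argument is in fact a bit more complete: you give an explicit justification for the total--variation bound (via the monotonicity of $J_N$ between jumps and the nonnegativity of $J_N$), and you handle the upper time endpoint $\tau^N_{[\psi_{\delta_N}(N)s]+1}$ rather than replacing it directly by $s$ as the paper does.
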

\begin{proof}
In this proof, we will use the following notations
\[\min_{\tau_{k}^N\le  r\le \tau_{k+1}^N} Y_{\delta_N}^{ref}(r)= Y_{\delta_N}^{ref}(r_Y^{k,N}) \quad \mbox{and} \quad \min_{\tau_{k}^N\le  r\le \tau_{k+1}^N} H_{\delta_N}(r)= H_{\delta_N}(r_H^{k,N}).\]
Let us define
\[U_{\delta_N}(s)= \int_0^s \int_{\delta_N}^\infty \left(z + \inf_{r\le u\le s} Y_{\delta_N}(u) - Y_{\delta_N}(r)\right)^+\Pi(dr,dz).\] 
We first note that $V_{U_{\delta_N}}[0,s]$, the total variation of $U_{\delta_N}$ on the interval $[0,s]$, satisfies
\begin{equation}\label{VT}
 \int_0^s \int_{\delta_N}^\infty z \Pi(dr,dz) \le V_{U_{\delta_N}}[0,s] \le  2 \int_0^s \int_{\delta_N}^\infty z \Pi(dr,dz). 
\end{equation}
However, we can rewrite \eqref{cHs} indexed by $\delta_N$ in the following form
$cH_{\delta_N}(s)= Y_{\delta_N}^{ref}(s)-U_{\delta_N}(s).$
It is not hard to obtain the following inequality
\begin{align}\label{ank}
U_{\delta_N}(r_Y^{k,N})= Y_{\delta_N}^{ref}(r_Y^{k,N})- cH_{\delta_N}(r_Y^{k,N}) &\le  \min_{\tau_{k}^N\le  r\le \tau_{k+1}^N} Y_{\delta_N}^{ref}(r) - \min_{\tau_{k}^N\le  r\le \tau_{k+1}^N} H_{\delta_N}(r) \nonumber\\&\le  Y_{\delta_N}^{ref}(r_H^{k,N}) - cH_{\delta_N}(r_H^{k,N})= U_{\delta_N}(r_H^{k,N})
\end{align}
Now, we have
\begin{align*}
K_1^N(s)&= \frac{1}{cN} \sum_{k=1}^{[\psi_{\delta_N}(N)s]} \left\{ (cH_{\delta_N}(\tau_k^N)- \min_{r\in [\tau_k^N,\tau_{k+1}^N] } cH_{\delta_N}(r) ) -  (Y_{\delta_N}^{ref}(\tau_k^N)- \min_{r\in [\tau_k^N,\tau_{k+1}^N] } Y_{\delta_N}^{ref}(r) ) \right\} \\
&=  \frac{1}{cN} \sum_{k=1}^{[\psi_{\delta_N}(N)s]} \left\{ -U_{\delta_N}(\tau_k^N) +\min_{r\in [\tau_k^N,\tau_{k+1}^N] } Y_{\delta_N}^{ref}(r) - \min_{r\in [\tau_k^N,\tau_{k+1}^N] } cH_{\delta_N}(r)  \right\} 
 =  \frac{1}{cN} \sum_{k=1}^{[\psi_{\delta_N}(N)s]}  \gamma_N(k),
 \end{align*}
 and \eqref{ank} implies that
$| \gamma_N(k)| \le \sup_{r_Y^{k,N} \le r\le r_H^{k,N}} | U_{\delta_N}(\tau_k^N) - U_{\delta_N}(r)|.$
Now from \eqref{VT}
\[\left| \sum_{k=1}^{[\psi_{\delta_N}(N)s]} \gamma_N(k) \right| \le  V_{U_{\delta_N}}[0,s] \le  2 \int_0^s \int_{\delta_N}^\infty z \Pi(dr,dz),\quad 
\mbox{which implies that} \quad 
 |K_1^N(s)| \le \frac{2}{cN} \int_0^s \int_{\delta_N}^\infty z \Pi(dr,dz). \]
The result follows easily from this estimate combined with assumption ${(\bf A)} $.
\end{proof}

For the proof of the next proposition, we need a basic result on Levy processes. Let us define
\[\Gamma(s)= \max_{0\le r\le s} (-Y_{\delta_N}(r)),\]
where $Y_{\delta_N}$ is a again a Lévy process with characteristic exponent $\psi_{\delta_N}$. The following result is Corollary 2, chapter VII in \cite{Ber}.
\begin{corollary}\label{COR1}
Since $\tau_1^N$ is an exponential random variable with parameter $\psi_{\delta_N}(N)$, independent of  $Y_{\delta_N}$, $\Gamma(\tau_1^N)$ has an exponential distribution with parameter $N$.
\end{corollary}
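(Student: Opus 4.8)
The plan is to read the corollary as an immediate application of the cited fluctuation identity for spectrally negative Lévy processes, so that the only thing to check is that the hypotheses of \cite[Ch.~VII, Cor.~2]{Ber} are in force and that its conclusion is transcribed into the present notation.

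First I would set $X:=-Y_{\delta_N}$. Since $Y_{\delta_N}$ has no negative jumps, $X$ has no positive jumps, i.e. $X$ is a spectrally negative Lévy process, and $\Gamma(s)=\max_{0\le r\le s}(-Y_{\delta_N}(r))=\sup_{0\le r\le s}X(r)$ is exactly its running supremum, which is the object treated by the cited corollary. As just recalled, $Y_{\delta_N}$ has Laplace exponent $\psi_{\delta_N}$, i.e. $\E\big[e^{-\lambda Y_{\delta_N}(t)}\big]=e^{t\psi_{\delta_N}(\lambda)}$ for $\lambda\ge0$; equivalently $\E\big[e^{\lambda X(t)}\big]=e^{t\psi_{\delta_N}(\lambda)}$, so $\psi_{\delta_N}$ is the Laplace exponent of $X$ in Bertoin's convention.

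Next I would record the elementary properties of $\psi_{\delta_N}$ that allow one to invert it on $[0,\infty)$: from \eqref{deltan}, $\psi_{\delta_N}(0)=0$, $\psi_{\delta_N}'(0)=0$, and $\psi_{\delta_N}''(\lambda)=2c+\int_{\delta_N}^\infty z^2e^{-\lambda z}\mu(dz)\ge 2c>0$ for $\lambda>0$, using $c>0$ (the integral being finite for $\lambda>0$ thanks to $(\bf H)$). Hence $\psi_{\delta_N}$ is continuous, strictly increasing and strictly convex on $[0,\infty)$ with $\psi_{\delta_N}(+\infty)=+\infty$, so it is a bijection of $[0,\infty)$ onto itself and $\psi_{\delta_N}^{-1}$ is well defined there. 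This $\psi_{\delta_N}^{-1}$ is precisely the right-inverse function that enters the statement of \cite[Ch.~VII, Cor.~2]{Ber}.

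Finally, since $\tau_1^N$ is by construction an exponential random variable with parameter $\psi_{\delta_N}(N)$ that is independent of $Y_{\delta_N}$ (hence of $X$), \cite[Ch.~VII, Cor.~2]{Ber} applies and gives that $\Gamma(\tau_1^N)=\sup_{0\le r\le\tau_1^N}X(r)$ is exponentially distributed with parameter $\psi_{\delta_N}^{-1}\big(\psi_{\delta_N}(N)\big)=N$, which is the assertion. I do not anticipate any real obstacle: the argument is a bookkeeping exercise, and the one point deserving a word of care is the sign convention — it is the passage to $X=-Y_{\delta_N}$ that turns the running infimum of the spectrally positive process $Y_{\delta_N}$ into a running supremum of a spectrally negative process, to which the cited corollary (and not some reflected variant of it) applies, the relevant exponent being $\psi_{\delta_N}$ itself.
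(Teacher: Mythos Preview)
Your proposal is correct and is exactly what the paper intends: the paper gives no argument of its own and simply attributes the statement to \cite[Ch.~VII, Cor.~2]{Ber}, and you have spelled out precisely the bookkeeping (passing to $X=-Y_{\delta_N}$, identifying $\psi_{\delta_N}$ as the Laplace exponent of the spectrally negative process $X$, and inverting it) needed to read that corollary in the present notation. There is nothing to add.
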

\begin{proposition}
 For any $s > 0$, $K_2^N(s) \longrightarrow s$ in probability, as $N \rightarrow \infty$.
\end{proposition}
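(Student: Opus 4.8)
The idea is to make each summand explicit. For $a=\tau_k^N$ and $b=\tau_{k+1}^N$, since the running infimum $\min_{0\le u\le\cdot}Y_{\delta_N}(u)$ is continuous (no negative jumps), a one-line case distinction — according to whether or not $Y_{\delta_N}$ attains a new overall infimum somewhere on $[a,b]$ — yields the pathwise identity
\[ Y^{ref}_{\delta_N}(\tau_k^N)-\min_{\tau_k^N\le r\le \tau_{k+1}^N}Y^{ref}_{\delta_N}(r)=\min\Bigl(Y^{ref}_{\delta_N}(\tau_k^N),\,M_k^N\Bigr),\qquad M_k^N:=\max_{\tau_k^N\le r\le \tau_{k+1}^N}\bigl(Y_{\delta_N}(\tau_k^N)-Y_{\delta_N}(r)\bigr). \]
Because $P^N$ is independent of $Y$, the $\tau_k^N$ are randomised stopping times for $Y_{\delta_N}$; writing $\GG_{\tau_k^N}$ for the $\sigma$-field generated by $Y_{\delta_N}$ and $P^N$ up to $\tau_k^N$, the strong Markov property shows that, conditionally on $\GG_{\tau_k^N}$, $M_k^N$ has the law of $\max_{0\le u\le\sigma}\bigl(-\widetilde Y_{\delta_N}(u)\bigr)$ for an independent copy $\widetilde Y_{\delta_N}$ of $Y_{\delta_N}$ and an independent $\sigma\sim Exp(\psi_{\delta_N}(N))$. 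By Corollary \ref{COR1} this law is $Exp(N)$, and $M_k^N$ is independent of $\GG_{\tau_k^N}$. In particular $(M_k^N)_{k\ge1}$ are i.i.d.\ $Exp(N)$, and each $M_k^N$ is independent of the $\GG_{\tau_k^N}$-measurable variable $Y^{ref}_{\delta_N}(\tau_k^N)$.

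Using $\min(y,m)=m-(m-y)^+$, this turns $K_2^N(s)$ into the difference $K_2^N(s)=S_N(s)-R_N(s)$ of
\[ S_N(s):=\frac{1}{cN}\sum_{k=1}^{[\psi_{\delta_N}(N)s]}M_k^N,\qquad R_N(s):=\frac{1}{cN}\sum_{k=1}^{[\psi_{\delta_N}(N)s]}\bigl(M_k^N-Y^{ref}_{\delta_N}(\tau_k^N)\bigr)^{+}. \]
The term $S_N(s)$ is easy: the number of summands $[\psi_{\delta_N}(N)s]$ is deterministic, dominated convergence (using Assumption $(\mathbf{H})$) gives $\psi_{\delta_N}(N)/N^2\to c$, whence $\E[S_N(s)]=[\psi_{\delta_N}(N)s]/(cN^2)\to s$ while $\mathbb{V}ar(S_N(s))=[\psi_{\delta_N}(N)s]/(c^2N^4)\to0$; thus $S_N(s)\to s$ in $L^2$, hence in probability.

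The heart of the matter is $R_N(s)\to0$, and since $R_N(s)\ge0$ it suffices that $\E[R_N(s)]\to0$. Conditioning on $\GG_{\tau_k^N}$ and using $\E[(M-y)^+]=N^{-1}e^{-Ny}$ for $M\sim Exp(N)$ and $y\ge0$, one gets $\E[R_N(s)]=(cN^2)^{-1}\sum_{k=1}^{[\psi_{\delta_N}(N)s]}\E\bigl[e^{-NY^{ref}_{\delta_N}(\tau_k^N)}\bigr]$. Fix $\eps>0$ and use $e^{-Ny}\le\mathbf{1}_{\{y\le\eps\}}+e^{-N\eps}$: the $e^{-N\eps}$ part contributes $([\psi_{\delta_N}(N)s]/cN^2)\,e^{-N\eps}\to0$, while for the indicator part we pick $h_\eps\in\CC(\R_+;[0,1])$ with $\mathbf{1}_{[0,\eps]}\le h_\eps\le\mathbf{1}_{[0,2\eps]}$ and use $\psi_{\delta_N}(N)/cN^2\to1$ together with Lemma \ref{GG} (the sums $\psi_{\delta_N}(N)^{-1}\sum_k h_\eps(Y^{ref}_{\delta_N}(\tau_k^N))$ being bounded by $s$, convergence in probability upgrades to $L^1$) to obtain $\limsup_{N}\E[R_N(s)]\le\E\bigl[\int_0^s\mathbf{1}_{\{Y^{ref}(r)\le2\eps\}}\,dr\bigr]$. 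Letting $\eps\downarrow0$ and invoking monotone convergence, the bound becomes $\E\bigl[\mathrm{Leb}\{r\in[0,s]:Y^{ref}(r)=0\}\bigr]=\int_0^s\P(Y^{ref}(r)=0)\,dr$, which vanishes: by the time-reversal identity $Y^{ref}(r)\overset{d}{=}\sup_{0\le u\le r}Y(u)$, and since $c>0$ the point $0$ is regular for $(0,\infty)$ (see \cite{Ber}), so $\sup_{0\le u\le r}Y(u)>0$ a.s. Hence $\E[R_N(s)]\to0$, and $K_2^N(s)=S_N(s)-R_N(s)\to s$ in probability.

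The only real obstacle is this last step: the crude bound $(M_k^N-Y^{ref}_{\delta_N}(\tau_k^N))^+\le M_k^N$ gives merely $\E[R_N(s)]=O(1)$, so one must genuinely exploit that $Y^{ref}_{\delta_N}(\tau_k^N)$ dominates (or is comparable to) $M_k^N\sim Exp(N)$ for all but a vanishing fraction of indices — equivalently, that the reflected Lévy process spends asymptotically no Lebesgue-time near $0$, which is what Lemma \ref{GG} combined with the regularity of $0$ (where $c>0$ enters) provides. A minor technical wrinkle is that Lemma \ref{GG} applies to a fixed continuous test function whereas $y\mapsto e^{-Ny}$ depends on $N$; this is bypassed by the $\eps$-truncation and the sandwiching of the indicator between continuous functions.
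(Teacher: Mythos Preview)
Your proof is correct and rests on the same ingredients as the paper's: the pathwise identity expressing each summand as $\min\bigl(Y^{ref}_{\delta_N}(\tau_k^N),\,M_k^N\bigr)$ with $M_k^N\sim Exp(N)$ independent of the past (Corollary~\ref{COR1}), the asymptotic $\psi_{\delta_N}(N)/cN^2\to1$, Lemma~\ref{GG}, and the fact that $Y^{ref}(r)>0$ for Lebesgue--a.e.\ $r$. The organisation differs only mildly: the paper sandwiches the sum $\frac{1}{\psi_{\delta_N}(N)}\sum_k \Xi_k\wedge NY^{ref}_{\delta_N}(\tau_k^N)$ between a lower bound $I_1^N+I_2^N$ and an upper bound $J_1^N+J_2^N+J_3^N$ built from the $\eps$--cutoffs $g_\eps,h_\eps$ together with zero-mean fluctuation terms controlled in variance, whereas you write $\min(y,m)=m-(m-y)^+$, handle the i.i.d.\ part $S_N$ in $L^2$, and bound the remainder $R_N\ge0$ directly in $L^1$ via the explicit formula $\E[(M-y)^+]=N^{-1}e^{-Ny}$. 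Your route is a touch more economical (one-sided rather than two-sided, and no separate martingale-increment terms), and you also supply the justification, via time-reversal and regularity of $0$ for $(0,\infty)$ when $c>0$, for the fact $Y^{ref}>0$ a.e.\ that the paper simply invokes.
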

\begin{proof}
We have
\[ K_2^N(s)=  \frac{\psi_{\delta_N}(N)}{cN^2}  \times \frac{1}{\psi_{\delta_N}(N)}\sum_{k=1}^{[\psi_{\delta_N}(N)s]}   N(Y_{\delta_N}^{ref}(\tau_k^N)- \min_{r\in [\tau_k^N,\tau_{k+1}^N] } Y_{\delta_N}^{ref}(r)).\]
We first notice that $0\le  e^{-\lambda} -1 + \lambda \le \lambda$, for all $\lambda\ge 0$, this implies
$\frac{\psi_{\delta_N}(N)}{cN^2} \longrightarrow 1, \ as \ N \rightarrow \infty.$
Let $\Gamma^{\prime}$ and $Y_{\delta_N}^{\prime}$ be independent copies of $\Gamma$ and $Y_{\delta_N}$ respectively. We notice that
\begin{align*}
Y_{\delta_N}^{ref}(\tau_k^N)- \min_{r\in [\tau_k^N,\tau_{k+1}^N] } Y_{\delta_N}^{ref}(r) &\stackrel{(d)}{=}  \left( \max_{r\in [\tau_k^N,\tau_{k+1}^N] } (-Y_{\delta_N}^{\prime}(r)) \right)  \wedge Y_{\delta_N}^{ref}(\tau_k^N) := \Gamma^{\prime}(\tau_{k+1}^N- \tau_{k}^N) \wedge Y_{\delta_N}^{ref}(\tau_k^N).
\end{align*}
Let $(\Xi_k)_{k\ge1}$ be an sequence of i.i.d random variables whose common law is that of $N\Gamma(\tau_1^N)$, such that in addition for any $k\ge1$, $\Xi_k$ and $\{Y_{\delta_N}^{ref}(r), r\le \tau_k^N\}$ are independent. We notice from Corollary \ref{COR1} that $\Xi_1$ has an standard exponential distribution. The Proposition is now a consequence the next lemma.
\end{proof}
\begin{lemma}
For any $s>0$,
\[ \frac{1}{\psi_{\delta_N}(N)}\sum_{k=1}^{[\psi_{\delta_N}(N)s]} \Xi_k \wedge NY_{\delta_N}^{ref}(\tau_k^N) \longrightarrow s \ in  \ probability,\ as \ N \rightarrow \infty.  \]
\end{lemma}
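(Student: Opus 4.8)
The plan is to show that the sum $\frac{1}{\psi_{\delta_N}(N)}\sum_{k=1}^{[\psi_{\delta_N}(N)s]} \Xi_k \wedge NY_{\delta_N}^{ref}(\tau_k^N)$ is close in probability to the sum $\frac{1}{\psi_{\delta_N}(N)}\sum_{k=1}^{[\psi_{\delta_N}(N)s]} \Xi_k$, because the truncation by $NY_{\delta_N}^{ref}(\tau_k^N)$ is rarely active, and then invoke the law of large numbers for the i.i.d.\ standard exponential variables $(\Xi_k)_{k\ge1}$, which gives that the latter sum converges to $s$ by Proposition \ref{tauNs} (it is exactly $\tau^N_{[\psi_{\delta_N}(N)s]}$ in distribution). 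So the crux is to control the difference
\[
D_N(s):=\frac{1}{\psi_{\delta_N}(N)}\sum_{k=1}^{[\psi_{\delta_N}(N)s]} \left(\Xi_k - \Xi_k \wedge NY_{\delta_N}^{ref}(\tau_k^N)\right)=\frac{1}{\psi_{\delta_N}(N)}\sum_{k=1}^{[\psi_{\delta_N}(N)s]} \left(\Xi_k - NY_{\delta_N}^{ref}(\tau_k^N)\right)^+ .
\]

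First I would estimate $\E[D_N(s)]$. Using the independence of $\Xi_k$ (standard exponential) from $\{Y_{\delta_N}^{ref}(r),\,r\le\tau_k^N\}$, condition on the latter: for a fixed value $a=NY_{\delta_N}^{ref}(\tau_k^N)\ge 0$ one has $\E[(\Xi_k-a)^+]=e^{-a}$. Hence
\[
\E[D_N(s)]=\frac{1}{\psi_{\delta_N}(N)}\sum_{k=1}^{[\psi_{\delta_N}(N)s]}\E\!\left[e^{-NY_{\delta_N}^{ref}(\tau_k^N)}\right]\le \sup_{k\le[\psi_{\delta_N}(N)s]}\E\!\left[e^{-NY_{\delta_N}^{ref}(\tau_k^N)}\right].
\]
Since $D_N(s)\ge 0$, it then suffices to show this supremum tends to $0$. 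The quantity $Y_{\delta_N}^{ref}(\tau_k^N)=Y_{\delta_N}(\tau_k^N)-\inf_{0\le r\le\tau_k^N}Y_{\delta_N}(r)$ is the Lévy process reflected at its infimum, evaluated at a time $\tau_k^N$ which is large (of order $k/\psi_{\delta_N}(N)$, and $\tau_k^N\to$ a positive value for $k\sim\psi_{\delta_N}(N)s$, while for small $k$ the reflected process starts near $0$). I would handle the small-$k$ indices separately: there are $o(\psi_{\delta_N}(N))$ of them if we split at $k\le \eps\psi_{\delta_N}(N)$, contributing at most $\eps$; for the remaining indices $\tau_k^N$ is bounded below by a positive constant with high probability, and on that event $Y_{\delta_N}^{ref}(\tau_k^N)$ is bounded below in probability, so that $\E[e^{-NY_{\delta_N}^{ref}(\tau_k^N)}]\to 0$ as $N\to\infty$ uniformly over those $k$. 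Concretely, the reflected process $Y_{\delta_N}^{ref}$ converges (as in the discussion preceding Lemma \ref{GG}, $\sup_{0\le r\le s}|Y^{ref}(r)-Y_{\delta_N}^{ref}(r)|\to 0$) to $Y^{ref}$, which is a.s.\ positive for any fixed positive time, so $NY_{\delta_N}^{ref}(\tau_k^N)\to+\infty$ and the exponential moment vanishes.

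The main obstacle is making the bound on $\E[e^{-NY_{\delta_N}^{ref}(\tau_k^N)}]$ genuinely uniform in $k$ over the range where $\tau_k^N$ is order one, since $\tau_k^N$ is random and its lower bound degrades as $k$ decreases toward $\eps\psi_{\delta_N}(N)$. A clean way around this is to use the strong Markov property at the (stopping) time $\tau_{k_0}^N$ with $k_0=[\eps\psi_{\delta_N}(N)]$: after that time the increments of $Y_{\delta_N}$ are an independent copy, the reflected process dominates the newly-reflected increment process, and Corollary \ref{COR1} (applied to $\tau_{k+1}^N-\tau_k^N$, an independent exponential of parameter $\psi_{\delta_N}(N)$) shows that each single one-step increment $\Gamma^{\prime}(\tau_{k+1}^N-\tau_k^N)$ already contributes an $\mathrm{Exp}(N)$-type term, so that $NY_{\delta_N}^{ref}(\tau_k^N)$ stochastically dominates a sum of order $k-k_0$ i.i.d.\ standard exponentials; hence $\E[e^{-NY_{\delta_N}^{ref}(\tau_k^N)}]\le (1/2)^{k-k_0}$, which is summable and uniformly small. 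Plugging this in gives $\E[D_N(s)]\le \eps+C\,2^{-1}/(\psi_{\delta_N}(N)(1-1/2))\cdot$(bounded)$\,\to \eps$, and since $\eps$ is arbitrary, $D_N(s)\to 0$ in $L^1$, hence in probability. Combining with $\frac{1}{\psi_{\delta_N}(N)}\sum_{k=1}^{[\psi_{\delta_N}(N)s]}\Xi_k\to s$ (law of large numbers, exactly as in Proposition \ref{tauNs}) completes the proof.
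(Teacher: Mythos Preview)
Your overall strategy---writing the truncated sum as $\frac{1}{\psi_{\delta_N}(N)}\sum_k\Xi_k - D_N(s)$ and showing $D_N(s)\to 0$ in $L^1$---is sound, and the identity $\E[(\Xi-a)^+]=e^{-a}$ for a standard exponential $\Xi$ independent of $a$ is correct. The problem lies in your justification that
\[
\frac{1}{\psi_{\delta_N}(N)}\sum_{k=1}^{[\psi_{\delta_N}(N)s]}\E\!\left[e^{-NY_{\delta_N}^{ref}(\tau_k^N)}\right]\longrightarrow 0.
\]
Neither of the two arguments you propose works. First, the claim that for $k\ge\eps\psi_{\delta_N}(N)$ the value $Y_{\delta_N}^{ref}(\tau_k^N)$ is ``bounded below in probability'' \emph{uniformly in $k$} is false: the reflected L\'evy process $Y^{ref}$ visits $0$ on a dense set of times in any interval, so for many $k$'s the point $\tau_k^N$ will land where $Y_{\delta_N}^{ref}$ is arbitrarily small, and for those $k$ the term $\E[e^{-NY_{\delta_N}^{ref}(\tau_k^N)}]$ stays close to $1$. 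Passing to the sup over $k$ therefore gives nothing. Second, the stochastic-domination argument via Corollary~\ref{COR1} is incorrect: that corollary controls $\Gamma(\tau_1^N)=-\inf_{r\le\tau_1^N}Y_{\delta_N}(r)$, the (monotone) running infimum, not the reflected process $Y_{\delta_N}^{ref}(\tau_k^N)=Y_{\delta_N}(\tau_k^N)-\inf_{r\le\tau_k^N}Y_{\delta_N}(r)$, which is \emph{not} monotone and certainly does not dominate a sum of $k-k_0$ independent exponentials (it can be $0$). Moreover, in the framework of this section $Y_{\delta_N}$ is treated as a fixed deterministic path, so there is no strong Markov property for it to invoke.

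The repair is to average rather than take a supremum: bound $e^{-Na}\le \mathbf 1_{\{a\le 2\eps\}}+e^{-N\eps}\le g_\eps(a)+e^{-N\eps}$ for a continuous cutoff $g_\eps$, and then use Lemma~\ref{GG} to get
\[
\frac{1}{\psi_{\delta_N}(N)}\sum_{k}g_\eps\big(Y_{\delta_N}^{ref}(\tau_k^N)\big)\xrightarrow[N\to\infty]{}\int_0^s g_\eps(Y^{ref}(r))\,dr\xrightarrow[\eps\to 0]{}0,
\]
since $Y^{ref}(r)>0$ for $dr$-a.e.\ $r$. This occupation-time step is exactly what the paper's proof does (its terms $J_2^N(s,\eps)$ and $I_1^N(s,\eps)$), and is the missing ingredient in your argument.
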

\begin{proof}
Let $\epsilon > 0$, which will eventually go to zero. Let $g_\epsilon , h_\epsilon : \R_+ \rightarrow \R$ be two functions defined by
\begin{equation*}
g_\epsilon (z)=
\left\{
    \begin{array}{ll}
  1, \quad \quad  \mbox{if} \quad z\le \epsilon, &\\\\ -\epsilon^{-1}z+2,   \quad \mbox{if} \quad  \epsilon<z \le 2\epsilon,&\\\\ 0,   \quad \mbox{if} \quad  z> 2\epsilon,

&
           \end{array}
           \right.
           \quad \mbox{and } \quad 
           h_\epsilon (z)=
\left\{
    \begin{array}{ll}
  0, \quad \quad  \mbox{if} \quad z\le \epsilon, &\\\\ -\epsilon^{-1}z-1,   \quad \mbox{if} \quad  \epsilon<z \le 2\epsilon,&\\\\ 1,   \quad \mbox{if} \quad  z> 2\epsilon.

&
           \end{array}
           \right.
\end{equation*}

It is not hard to see that
\[ I_1^N(s, \epsilon)+  I_2^N(s, \epsilon) \le  \frac{1}{\psi_{\delta_N}(N)}\sum_{k=1}^{[\psi_{\delta_N}(N)s]} \Xi_k \wedge NY_{\delta_N}^{ref}(\tau_k^N) \le J_1^N(s)+ J_2^N(s, \epsilon)+  J_3^N(s, \epsilon), \]
where
\begin{align*}
I_1^N(s, \epsilon)&=\frac{1-e^{-N\epsilon}}{\psi_{\delta_N}(N)}\sum_{k=1}^{[\psi_{\delta_N} (N)s]} h_\epsilon(Y_{\delta_N}^{ref}(\tau_k^N)) \quad [\mbox{where using the identity} \ \E(\Xi_k \wedge N\epsilon)=1-e^{-N\epsilon}],\\
I_2^N(s, \epsilon)&= \frac{1}{\psi_{\delta_N}(N)}\sum_{k=1}^{[\psi_{\delta_N} (N)s]} (\Xi_k \wedge N\epsilon - \E(\Xi_k \wedge N\epsilon)) \mathbf{1}_{\{Y_{\delta_N}^{ref}(\tau_k^N)> \epsilon \}}, \quad \quad J_1^N(s)=  \frac{1}{\psi_{\delta_N}(N)}\sum_{k=1}^{[\psi_{\delta_N} (N)s]} \Xi_k,\\
J_2^N(s, \epsilon)&= \frac{1}{\psi_{\delta_N}(N)}\sum_{k=1}^{[\psi_{\delta_N} (N)s]} g_\epsilon (Y_{\delta_N}^{ref}(\tau_k^N)),  \quad \mbox{and}\quad  J_3^N(s, \epsilon)= \frac{1}{\psi_{\delta_N}(N)}\sum_{k=1}^{[\psi_{\delta_N} (N)s]} (\Xi_k-1) \mathbf{1}_{\{Y_{\delta_N}^{ref}(\tau_k^N)\le \epsilon \}}.
\end{align*}
However, we first have
\begin{align*}
\E[I_2^N(s, \epsilon)]^2 \le  \frac{1}{(\psi_{\delta_N}(N))^2}\sum_{k=1}^{[\psi_{\delta_N} (N)s]} \mathbb{V}ar(\Xi_k \wedge N\epsilon) 
=&  \frac{1}{(\psi_{\delta_N}(N))^2}\sum_{k=1}^{[\psi_{\delta_N} (N)s]} (1- e^{-2N\epsilon}- 2N\epsilon e^{-N\epsilon})\longrightarrow 0,\ as \ N\rightarrow +\infty. 
\end{align*}
We can prove similarly that
$\E[J_3^N(s, \epsilon)]^2\longrightarrow 0,\ as \ N\rightarrow +\infty. $
Combining Lemma \ref{GG} and the fact that $Y^{ref}(r)>0$ $dr$ $a.s$, we deduce
\[  I_1^N(s, \epsilon) \xrightarrow[N\rightarrow \infty]{} \int_0^s h_\epsilon(Y_{\delta_N}^{ref}(r)) dr  \xrightarrow[ \epsilon \rightarrow 0]{} s, \quad \mbox{and} \quad
 J_2^N(s, \epsilon) \xrightarrow[N\rightarrow \infty]{} \int_0^s g_\epsilon(Y_{\delta_N}^{ref}(r) )dr  \xrightarrow[ \epsilon \rightarrow 0]{} 0.\]
In addition, we deduce from the law of large numbers that
$J_1^N(s) \longrightarrow s,\ as \ N\rightarrow +\infty.$
The desired result follows by combining the above arguments.
\end{proof}

Now, let us define
\begin{align*}
w_\delta^N(a,b)= \sup_{a\le r,s\le b, \ |s-r|\le \delta} |H^N(s)-H^N(r)|,
\quad \mbox{and} \quad w_{N,\delta}(a,b)= \sup_{a\le r,s\le b, \ |s-r|\le \delta} |H_{\delta_N}(s)-H_{\delta_N}(r)|.
\end{align*}
We shall also need below the
\begin{proposition}\label{HNtight}
For all $\epsilon > 0$,
$\lim_{\delta\rightarrow 0} \limsup_{N\rightarrow \infty} \P\left( w_\delta^N(a,b) \ge \epsilon \right)=0.$
\end{proposition}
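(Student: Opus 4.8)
The plan is to bound the modulus of continuity $w_\delta^N(a,b)$ of the piecewise affine process $H^N$ by the modulus of continuity of $H_{\delta_N}$ over a comparable time scale, and then to exploit that $H_{\delta_N}$ is uniformly close to the continuous process $H$. Write $\sigma_k^N$ for the instant at which $H^N$ attains the value $H_{\delta_N}(\tau_k^N)$; by \eqref{ega} and the remark following \eqref{314} one has $\sigma_k^N=K_N(k/\psi_{\delta_N}(N))$, with $\sigma_0^N=0$. On the affine block $[\sigma_k^N,\sigma_{k+1}^N]$ the graph of $H^N$ first decreases with slope $-2N$ from $H_{\delta_N}(\tau_k^N)$ down to $\min_{[\tau_k^N,\tau_{k+1}^N]}H_{\delta_N}$, and then increases with slope $+2N$ up to $H_{\delta_N}(\tau_{k+1}^N)$; hence the set of values taken by $H^N$ there is exactly $[\,\min_{[\tau_k^N,\tau_{k+1}^N]}H_{\delta_N},\ \max(H_{\delta_N}(\tau_k^N),H_{\delta_N}(\tau_{k+1}^N))\,]$. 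Since increments are dominated by oscillations, it follows that whenever $[u,v]\subset[\sigma_j^N,\sigma_{j'}^N]$,
\[ \sup_{[u,v]}H^N-\inf_{[u,v]}H^N\ \le\ \sup_{[\tau_j^N,\tau_{j'}^N]}H_{\delta_N}-\inf_{[\tau_j^N,\tau_{j'}^N]}H_{\delta_N}. \]

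I would then use that both random time changes are asymptotically the identity. By Proposition \ref{KNs} together with the monotonicity of $s\mapsto K_N(s)$, $\kappa_N:=\sup_{0\le u\le b+1}|K_N(u)-u|\to0$ in probability; by Proposition \ref{tauNs} together with the monotonicity of $k\mapsto\tau_k^N$, $\theta_N:=\sup_{0\le u\le b+1}|\tau^N_{[\psi_{\delta_N}(N)u]}-u|\to0$ almost surely. Fix a window $[r,s]\subset[a,b]$ with $s-r\le\delta$, and choose $k_-,k_+$ with $\sigma_{k_-}^N\le r<\sigma_{k_-+1}^N$ and $\sigma_{k_+}^N<s\le\sigma_{k_++1}^N$, so that $[r,s]\subset[\sigma_{k_-}^N,\sigma_{k_++1}^N]$. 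Using $\sigma_k^N=K_N(k/\psi_{\delta_N}(N))$ and the closeness of $\tau_k^N$ to $k/\psi_{\delta_N}(N)$, elementary estimates yield $0\le\tau_{k_-}^N$, $\tau_{k_++1}^N\le b+1$ and $\tau_{k_++1}^N-\tau_{k_-}^N\le\delta+\rho_N$ with $\rho_N:=2\kappa_N+2\theta_N+2/\psi_{\delta_N}(N)\to0$ in probability. Plugging $j=k_-$, $j'=k_++1$ into the display above and taking the supremum over all such windows, we get that on the event $\{\rho_N\le\delta\}$ and for $N$ large enough that the indices stay in range,
\[ w_\delta^N(a,b)\ \le\ w_{N,2\delta}(0,b+1). \]

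It remains to control $w_{N,2\delta}(0,b+1)$ uniformly in $N$ and then let $\delta\to0$. For this I would invoke that $\sup_{0\le u\le b+1}|H_{\delta_N}(u)-H(u)|\to0$ in probability---the convergence of the height process upon removal of the small jumps, which follows from $\sup|Y_{\delta_N}^{ref}-Y^{ref}|\to0$ and Assumption $({\bf A})$ in the spirit of the proof of Lemma \ref{GG}, and which underlies Theorem \ref{THP}---together with the fact that $H$ has continuous paths. Writing $w_{2\delta}^H(0,b+1)$ for the modulus of continuity of $H$ on $[0,b+1]$, one has $w_{N,2\delta}(0,b+1)\le w_{2\delta}^H(0,b+1)+2\sup_{0\le u\le b+1}|H_{\delta_N}(u)-H(u)|$, hence for every fixed $\delta>0$
\[ \limsup_{N\to\infty}\P\big(w_\delta^N(a,b)\ge\epsilon\big)\ \le\ \P\big(w_{2\delta}^H(0,b+1)\ge\epsilon/2\big), \]
and the right-hand side tends to $0$ as $\delta\downarrow0$ by uniform continuity of $H$ on the compact $[0,b+1]$.

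The main difficulty lies in the reduction carried out in the first two paragraphs. Because the affine pieces of $H^N$ have slope $\pm2N\to\infty$, one cannot bound $|H^N(s)-H^N(r)|$ by $2N|s-r|$; the gain comes from the fact that a time window of length $\delta$ for $H^N$ corresponds, through $K_N$ and the $\tau_k^N$'s, to a window of comparable length $\approx\delta$ for $H_{\delta_N}$, over which $H_{\delta_N}$---being uniformly close to the continuous $H$---has small oscillation. Making this precise requires keeping the index bookkeeping (the choice of $k_\pm$, and the control $\tau_{k_-}^N,\tau_{k_++1}^N\in[0,b+1]$ with $\tau_{k_++1}^N-\tau_{k_-}^N$ close to $s-r$) uniform over all windows $[r,s]\subset[a,b]$; the upgrades of Propositions \ref{KNs} and \ref{tauNs} to locally uniform convergence, via monotonicity, are exactly what make this possible, and the rest is routine.
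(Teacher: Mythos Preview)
Your argument is correct and follows essentially the same route as the paper's proof. Both reduce the oscillation of $H^N$ over a $\delta$-window to the oscillation of $H_{\delta_N}$ over a window of comparable length, by showing that the composite time change from $H^N$-time to $H_{\delta_N}$-time is uniformly close to the identity (via Propositions~\ref{tauNs} and~\ref{KNs}, upgraded to local uniformity by monotonicity), and then invoke the tightness of $\{H_{\delta_N}\}$ in $\mathcal{C}$ coming from $H_{\delta_N}\to H$. The paper packages the time change as a single map $\mathcal{H}_N(s)=\tau^N_{[\psi_{\delta_N}(N)K_N^{-1}(s)]}$ and states an inclusion of events, whereas you carry out the index bookkeeping with $k_\pm$ directly; these are cosmetic differences only.
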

\begin{proof}
We have
\[\left\{w_\delta^N(a,b) > \epsilon \right\} \subset \left\{\sup_{a<s<b} |\mathcal{H}_N(s)-s|+ \sup_{k_a\le k\le k_b}(\tau_{k+1}^N-\tau_{k}^N) >\delta \right\} \cup \{w_{N,3\delta}(a-\delta,b+\delta)>\epsilon\}, \]
where $\mathcal{H}_N(s)= \tau_{[\psi_{\delta_N}(N)K_N^{-1}(s)]}^N$, $k_a=[\frac{a}{\psi_{\delta_N}(N)}]$ and $k_b=[\frac{b}{\psi_{\delta_N}(N)}]-1$. 

So the result follows from both the two following facts : for each $\delta > 0$,
\begin{equation}\label{convunif}
\forall \delta>0, \ \P\left(\sup_{a<r<b} |\mathcal{H}_N(r)-r| >\delta \right) \rightarrow 0, \ as \ N\rightarrow \infty,
\end{equation}
\begin{equation}\label{tightness}
\forall \epsilon>0, \  \lim_{\delta\rightarrow 0} \limsup_{N\rightarrow \infty} \P\left( w_{N, 3\delta}(a-\delta,b-\delta) > \epsilon \right)=0.
\end{equation}
PROOF OF \eqref{convunif}.  It follows from a combination of Propositions \ref{tauNs} and \ref{KNs} that $\mathcal{H}_N(s) \rightarrow s$ in probability, for any $s>0$. Moreover for any $N, s \rightarrow \mathcal{H}_N(s)$ is increasing. Let $M\ge1$ and $a= s_0<s_1<\cdots <s_M=b$ be such that $\sup_{0\le i\le M-1} (s_{i+1}- s_i) \le \delta/2$. For any $\epsilon>0$, we can choose $N_\epsilon$ large enough such that for all $N\ge N_{\epsilon}$, 
$\P\left( \bigcap_{i=0}^{M}\left\{ |\mathcal{H}_N(s_i)-s_i | \le \delta/2\right\}  \right) \ge 1-\epsilon.$
But for any $s_I \le s \le s_{i+1}$, on the event $\bigcap_{i=0}^{M}\left\{ |\mathcal{H}_N(s_i)-s_i | \le \delta/2\right\}$,
\[s-\delta\le s_i- \delta/2\le  \mathcal{H}_N(s_i) \le \mathcal{H}_N(s) \le \mathcal{H}_N(s_{i +1}) \le s_{i +1}+ \delta/2\le s+ \delta, \]
hence we have shown that for $N \ge N_\epsilon$, the following property equivalent to  \eqref{convunif}
\[ \P\left(\sup_{a<s<b} |\mathcal{H}_N(s)-s| \le \delta \right)\ge 1-\epsilon. \]

PROOF OF \eqref{tightness}. Since $H_{\delta_N}(s) \rightarrow H(s)$ uniformly in $[(a-1)\vee 0, b+1]$ in probability, hence it converges in law in $\mathcal{C}([(a-1)\vee 0, b+1])$, hence the sequence $\{ H_{\delta_N}, N\ge1\}$ is tight in $\mathcal{C}([(a-1)\vee 0, b+1])$, from which \eqref{tightness} follows.
\end{proof}

$\bf{Proof \ of \  Theorem \ \ref{THP} :}$ From \eqref{ega}, we have 
\begin{align*}
|H^N(s)-H(s)|\le |H^N(s)-H^N(K_N(s))|+ \left|H_{\delta_N} \left(\tau_{[\psi_{\delta_N}(N)s]}^N \right)-H \left(\tau_{[\psi_{\delta_N}(N)s]}^N \right) \right| + \left|H \left(\tau_{[\psi_{\delta_N}(N)s]}^N \right) -H(s) \right|.
\end{align*}
A combination of Propositions \ref{KNs} and \ref{HNtight} implies that the first term on the right tends to $0$ in probability, as $N\rightarrow +\infty$. Since $H_{\delta_N} \rightarrow H$ $a.s.$ locally uniformly in $s$, and from Proposition \ref{tauNs}, $\tau_{[\psi_{\delta_N}(N)s]} \rightarrow s$ $a.s.$, the second term tends to $0$ a.s. Finally the last term tends to $0$ $a.s.$ thanks again to Proposition \ref{tauNs} and the continuity of $H$.

We have juste proved that for each $s>0$, $H^N(s) \longrightarrow H(s)$ in probability, as $N \rightarrow +\infty$. Since from Proposition \ref{HNtight}, $H$ is tight in $\mathcal{C}([0, s])$ for all $s > 0$, the convergence is locally uniform in $s$.
$\hfill \blacksquare$

$\mathbf{Ackownledgement}$.
I would like to give my sincere thanks to my Phd supervisor Professor Etienne Pardoux for his englightening discussions and helpful suggestions. I would also like to thank Professor Thomas Duquesne for his suggestion to study the approximation studied in the present paper. 
%%%%%%%%%%%%%%%%%%%%%%%%%%%% BiBlio %%%%%%%%%%%%%%%%%%%%%%%%%%%%%%%%%%%%%%%%%%%%%%%%%%%%%%%%%%%%%%%%%%%%% 
\frenchspacing
\bibliographystyle{plain}

\begin{thebibliography}{99}
\bibitem{Ber}
Bertoin, J.
\newblock L\'{e}vy processes,  Cambridge University Press,
  1996.
\bibitem{DaLi}
Dawson, D.~A.  and Li, Z.
\newblock Stochastic equations, flows and measure-valued processes.
\newblock {\em The Annals of Probability}, {\bf40}(2):813--857, 2012.
\bibitem{DPS}
Dram{\'e}, I. Pardoux,  E. and  Sow, A.B.
\newblock Non--binary branching process and non-markovian exploration process.
\newblock {\em ESAIM: Probability and Statistics} {\bf21}, 1--33, 2017.
\bibitem{DP}
Dram{\'e}, I. and Pardoux, E. \newblock Approximation of a generalized continuous-state branching process with interaction.
 \newblock {\em  Electronic Communications in Probability}, \textbf{23}, 2018.  
 \bibitem{DP2}
Dram{\'e}, I. and Pardoux, E. \newblock Approximation of the Height process of a continuous state branching process with interaction.
\newblock {\em arXiv preprint arXiv:1706.06892v2}, 2019.
\bibitem{DLG}
Duquesne, T. and  Le~Gall, J.-F.
\newblock {\em Random trees, L{\'e}vy processes and spatial branching
  processes}, Asr\'erisque vol. {\bf281}.
\newblock Soci{\'e}t{\'e} mathématique de France, 2002.
\bibitem{FL}
Fu, Z. and Li,~Z.
\newblock Stochastic equations of non-negative processes with jumps.
\newblock {\em Stochastic Processes and their Applications}, {\bf120}(3):306--330,
  2010.
\bibitem{Grey}
Grey, D.
\newblock Asymptotic behaviour of continuous time, continuous state-space
  branching processes.
\newblock {\em Journal of Applied Probability} {\bf11},  669--677, 1974.
\bibitem{Ji}
Ji{\v{r}}ina, M.
\newblock Stochastic branching processes with continuous state space.
\newblock {\em Czechoslovak Mathematical Journal}, {\bf8}(2):292--313, 1958.
\bibitem{Lam1}
Lamperti, J.
\newblock Continuous-state branching processes.
\newblock {\em Bull. Amer. Math. Soc}, {\bf73}(3):382--386, 1967.
\bibitem{Lam2}
Lamperti, J.
\newblock The limit of a sequence of branching processes.
\newblock {\em Probability Theory and Related Fields}, {\bf7}(4):271--288, 1967.
\bibitem{lGlJ}
 Le~Gall, J.-F. and Le~Jan, Y.
\newblock Branching processes in Lévy processes: the exploration process.
\newblock {\em Annals of probability} {\bf26}, 213--252, 1998.
\bibitem{LiPW}
Li,~Z., Pardoux.~E., and Wakolbinger, A.
\newblock The Height process of a general CSBP with interaction.
\newblock {\em arXiv preprint arXiv:1904.04151}, 2019.
\bibitem{Sil}
Silverstein, M.~L. 
\newblock A new approach to local times (local time existence and property
  determination using real valued strong Markov process basis). 
 \newblock {\em Journal of Mathematics and Mechanics}, {\bf17}, 1023--1054, 1968.




\end{thebibliography}

\end{document}